\theoremstyle{definition}
\newtheorem{defin}{Definition}[section]
\theoremstyle{plain}
\newtheorem{theo}[defin]{Theorem}
\newtheorem{lemma}[defin]{Lemma}
\newtheorem{obs}[defin]{Remark}
\newtheorem{prop}[defin]{Proposition}
\newtheorem*{theorem-no-number}{Theorem}
\newtheorem{theorem}{Theorem}
\renewenvironment{abstract}
{\par\noindent\textbf{\abstractname.}\ \ignorespaces}
{\par\medskip}
\title{Bishop-Jones' Theorem and the ergodic limit set}
\author{Nicola Cavallucci}
\date{}
\begin{document}
\maketitle
\begin{abstract}
	\footnotesize
	For a proper, Gromov-hyperbolic metric space and a discrete, non-elementary, group of isometries, we define a natural subset of the limit set at infinity of the group called the ergodic limit set. The name is motivated by the fact that every ergodic measure which is invariant for the geodesic flow on the quotient metric space is concentrated on geodesics with endpoints belonging to the ergodic limit set.
	We refine the classical Bishop-Jones' Theorem proving that the packing dimension of the ergodic limit set coincides with the critical exponent of the group.
\end{abstract}
\tableofcontents

\section{Introduction}
The critical exponent of a discrete group of isometries of a proper metric space, defined as
\begin{equation}
	\label{eq:defin_critical_exponent}
	h_\Gamma := \limsup_{T \to +\infty} \frac{1}{T} \log \#(\Gamma x \cap B(x,T)),
\end{equation}
is a widely studied invariant, especially in case of negatively curved spaces. The classical and celebrated Bishop-Jones' Theorem relates $h_\Gamma$ to fine analytical properties of the boundary at infinity of $\Gamma$ if $X$ is Gromov-hyperbolic. It states what follows.

\begin{theo}[\cite{BJ97}, \cite{Pau97}, \cite{DSU17}]
	\label{theo:intro-Bishop-Jones}
	Let $X$ be a proper, $\delta$-hyperbolic metric space and let $\Gamma < \textup{Isom}(X)$ be non-elementary and discrete. Then
	$$h_\Gamma = \textup{HD}(\Lambda_{\textup{rad}}).$$
\end{theo}

We briefly explain the terms appearing in Theorem \ref{theo:intro-Bishop-Jones}, we refer to Sections \ref{sec:Gromov_hyperbolic}, \ref{sec-HP-dimensions}, \ref{sec-Bishop_Jones} for more details. Every $\Gamma$ as in the statement defines a limit set $\Lambda$, which is the set of accumulation points on the boundary at infinity $\partial X$ of $X$ of the set $\Gamma x$, with $x\in X$ fixed. This set does not depend on the choice of $x$ and it is the smallest closed $\Gamma$-invariant subset of $\partial X$. The boundary $\partial X$ of $X$ admits several visual metrics $D_{x,a}$ depending on the choice of a point $x\in X$ and a parameter $a > 0$. Given a subset $Y\subseteq \partial X$, one can computes the classical notions of fractal dimensions of $Y$ with respect to all these metrics. It turns out that, denoting for instance by $\text{HD}_{D_{x,a}}(Y)$ the Hausdorff dimension of $Y$ computed with respect to the metric $D_{x,a}$, then $a\cdot \text{HD}_{D_{x,a}}(Y) = b \cdot \text{HD}_{D_{x',a'}}(Y)$ for every admissible value of $a$ and $a'$ and every choice of $x$ and $x'$. This common value is simply denoted by $\text{HD}(Y)$ and it is called the generalized Hausdorff dimension of the set $Y$. In Section \ref{sec-HP-dimensions} we will see a natural construction of $\text{HD}(\cdot)$ via generalized visual balls. A similar construction, with similar properties as above, holds for other notions of dimensions, allowing us to define the generalized Minkowski dimension $\text{MD}(\cdot)$ and the generalized packing dimension $\text{PD}(\cdot)$. We refer to Section \ref{sec-HP-dimensions} for more details. \\
By definition, every point $z$ of the limit set $\Lambda$ of $\Gamma$ is the limit of a sequence of orbit points $\{g_i x\}_{i\in \mathbb{N}}$. However this sequence can converge to $z$ in different ways. A point $z \in \partial X$ is called radial if there exists a geodesic ray $\xi$ and a sequence $\{g_i x\}_{i\in \mathbb{N}}$ converging to $z$ such that $\sup_{i\in \mathbb{N}}d(\xi, g_i x) < \infty$. The set of all radial points, denoted by $\Lambda_{\text{rad}}$, appears in Theorem \ref{theo:intro-Bishop-Jones}. In particular the critical exponent of $\Gamma$, as defined in \ref{eq:defin_critical_exponent}, coincides with the generalized Hausdorff dimension of the radial limit set. In Theorem \ref{Bishop-Jones} we will recall the beautiful improvement of \cite{DSU17}, stating, among the other things, that one can find a smaller subset $\Lambda_{\text{u-rad}}$ of $\Lambda_{\text{rad}}$, called the set of uniformly radial limit points, for which the equality in Theorem \ref{theo:intro-Bishop-Jones} still holds. 

On the other hand one might wonder if the conclusion of Theorem \ref{theo:intro-Bishop-Jones} continue to hold if we replace the generalized Hausdorff dimension with other fractal dimensions. This is not possible for the generalized Minkowski dimension since $\text{MD}(\Lambda_{\text{u-rad}}) = \text{MD}(\Lambda_{\text{rad}}) = \text{MD}(\Lambda)$ because $\Lambda$ is the closure of the other two sets and it is known that generically $\text{MD}(\Lambda) > h_\Gamma$.
Indeed in \cite{DPPS09} there is an example of a pinched negatively curved Riemannian manifold $(M,g)$ admitting a non-uniform lattice $\Gamma$ (i.e. the volume of $\Gamma\backslash M$ is finite) such that $h_\Gamma < h_\text{vol}(M)$, where $h_\text{vol}(M)$ is the volume entropy of $M$. Since $\Gamma$ is a lattice we have $\Lambda = \partial M$, so $\text{MD}(\Lambda) = h_\text{vol}(M) > h_\Gamma$ by \cite[Theorem B]{Cav21}.

Concerning the packing dimension, our main finding is the following contribution to Theorem \ref{theo:intro-Bishop-Jones}.
\begin{theorem}
	\label{theo:intro-packing-ergodic-set}
	Let $X$ be a proper, $\delta$-hyperbolic metric space and let $\Gamma < \textup{Isom}(X)$ be non-elementary and discrete. Then
	$$h_\Gamma = \textup{PD}(\Lambda_{\textup{erg}}).$$
\end{theorem}

Here $\Lambda_{\text{erg}}$, called the ergodic limit set, is a subset satisfying $\Lambda_{\text{u-rad}} \subseteq \Lambda_{\text{erg}} \subseteq \Lambda_{\text{rad}}$. Its precise definition will be given in Section \ref{sec-Bishop_Jones}. The name will be explained in a moment. Before that we report that the same techniques used for the proof of Theorem \ref{theo:intro-packing-ergodic-set}, actually a simplified version of them, will be used to prove that the limit superior in \eqref{eq:defin_critical_exponent} is a true limit, generalizing Roblin's result (cp. \cite{Rob02}) holding for CAT$(-1)$ spaces.
\begin{theorem}
	\label{theo-intro-Roblin}
	Let $X$ be a proper, $\delta$-hyperbolic metric space and let $\Gamma < \textup{Isom}(X)$ be discrete and non-elementary. Then
	$$\limsup_{T \to +\infty} \frac{1}{T} \log \#(\Gamma x \cap B(x,T)) = \liminf_{T \to +\infty} \frac{1}{T} \log \#(\Gamma x \cap B(x,T)) = h_\Gamma.$$
\end{theorem}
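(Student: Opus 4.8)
The plan is to reduce the statement, via the refined Bishop--Jones Theorem established earlier, to an orbit-counting estimate obtained by pushing a Frostman measure on a large-dimensional piece of the conical limit set through shadows of orbit points. Set $N(T):=\#\Gamma x\cap B(x,T)$. By definition $\limsup_{T\to+\infty}\tfrac1T\log N(T)=h_\Gamma$, so there is nothing to prove for the limit superior; since $N$ is non-decreasing and $\liminf\le\limsup$, it suffices to show that for every $s<h_\Gamma$ one has $N(T)\ge c(s)\,e^{sT}$ for all $T$ large enough. If $\Gamma$ is elementary then $h_\Gamma=0$ and $N(T)$ grows at most polynomially, so this is immediate; assume henceforth $\Gamma$ non-elementary. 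I will use two standard facts about the boundary of a proper $\delta$-hyperbolic space: $\partial X$ is compact and carries visual metrics, and for a suitably normalised visual metric the shadow $\mathcal O_R(x,gx)=\{\xi\in\partial X:\ [x,\xi)\cap B(gx,R)\ne\emptyset\}$ of an orbit point satisfies $\operatorname{diam}\mathcal O_R(x,gx)\preceq_{R,\delta} e^{-d(x,gx)}$; in particular $\mathcal O_R(x,gx)$ is contained in a ball of that radius about any of its points.

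Fix $s<h_\Gamma$ and apply the refined Bishop--Jones Theorem: it provides $R>0$, $\tau>0$ and a compact subset $K\subseteq\Lambda^R_{\mathrm{rad}}(\Gamma)$ of the conical limit set with $\dim_H K>s$ on which the returns to the orbit are uniformly spaced at scale $\tau$, i.e. for every $\xi\in K$ and every $n\in\mathbb N$ there is $g\in\Gamma$ with $[x,\xi)\cap B(gx,R)\ne\emptyset$ and $d(x,gx)\in[n\tau,(n+1)\tau]$. (If the refinement is instead stated in tree/coding form, one uses a level set $W_n$ of admissible words with $\#W_n\succeq e^{sn\tau}$ and pairwise disjoint shadows; the count below becomes even more direct.) By Frostman's Lemma choose a Borel probability measure $\mu$ supported on $K$ with $\mu(B(\xi,\rho))\le\rho^{s}$ for all $\xi\in\partial X$, $\rho>0$.

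Now fix a large $T$ and put $n=\lfloor T/\tau\rfloor$. For each $\xi\in K$ pick $g_\xi\in\Gamma$ as above with $d(x,g_\xi x)\in[n\tau,(n+1)\tau]$; then $\xi\in\mathcal O_R(x,g_\xi x)$ and, by the shadow estimate, $\mathcal O_R(x,g_\xi x)\subseteq B(\xi,C_1 e^{-n\tau})$, so $\mu(\mathcal O_R(x,g_\xi x))\le C_1^{s}e^{-sn\tau}$. The family $\{\mathcal O_R(x,g_\xi x):\xi\in K\}$ covers $K$ and every $g_\xi x$ lies in $B(x,(n+1)\tau)\subseteq B(x,T+\tau)$, hence
\begin{equation*}
1=\mu(K)\le\sum_{g\in\Gamma x\cap B(x,T+\tau)}\mu\big(\mathcal O_R(x,gx)\big)\le N(T+\tau)\cdot C_1^{s}e^{-sn\tau}.
\end{equation*}
Therefore $N(T+\tau)\ge C_1^{-s}e^{sn\tau}\ge C_1^{-s}e^{s(T-\tau)}$, i.e. $N(T)\ge c(s)\,e^{sT}$ for all large $T$. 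Letting $s\uparrow h_\Gamma$ gives $\liminf_{T\to+\infty}\tfrac1T\log N(T)\ge h_\Gamma$, and together with the trivial reverse inequality this proves the theorem.

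The whole difficulty is concentrated in the input from the refined Bishop--Jones Theorem. The classical version only yields $\dim_H\Lambda_{\mathrm{rad}}=h_\Gamma$ with no control on the gaps between successive returns of a conical ray to the orbit, and it is exactly these a priori unbounded gaps that permit the counting rate to oscillate and thus block the passage from a subsequential lower bound to one valid for every $T$; the refinement removes this by exhibiting a near-full-dimensional subset with returns at a single fixed scale $\tau$. The only technical care left is that the shadow constant $C_1$ and the Frostman bound depend on $\delta,R,s$ alone, so that $c(s)$ is genuinely independent of $T$.
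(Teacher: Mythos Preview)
Your argument is correct and shares its geometric core with the paper's proof: both rely on the Bishop--Jones identity $h_\Gamma=\sup_{\tau}\textup{HD}(\Lambda_\tau)$ and on the fact that $\Lambda_\tau$ is covered, at each scale $e^{-T}$, by shadows of the orbit points in a ball of radius roughly $T$. The difference lies in how this covering is converted into the inequality $\underline{h_\Gamma}\ge h_\Gamma$. You push a Frostman measure of exponent $s<\textup{HD}(\Lambda_\tau)$ through the shadow cover to get $N(T)\gtrsim e^{sT}$ for every large $T$. The paper instead argues via lower Minkowski dimension: it selects a subsequence $T_j$ realizing $\underline{h_\Gamma}$, uses the shadow cover at those scales to bound $\textup{Cov}(\Lambda_\tau,e^{-T_j})\le e^{(\underline{h_\Gamma}+\varepsilon)T_j}$, and deduces $\underline{\textup{MD}}(\Lambda_\tau)\le\underline{h_\Gamma}$; combined with $h_\Gamma\le\sup_\tau\underline{\textup{MD}}(\Lambda_\tau)$ this finishes. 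The paper's route is slightly shorter (no Frostman lemma) and yields as a by-product the equalities $\sup_\tau\underline{\textup{MD}}(\Lambda_\tau)=\sup_\tau\overline{\textup{MD}}(\Lambda_\tau)=h_\Gamma$, which it needs later for the lower bound on the topological entropy; your route makes the growth estimate on $N(T)$ explicit. Two minor remarks: the elementary/non-elementary dichotomy is unnecessary (the case $h_\Gamma=0$ is trivial regardless), and the Frostman bound should strictly carry a multiplicative constant depending on $s$, which of course does not affect the conclusion.
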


Let us come back to the motivation behind the name of the ergodic limit set $\Lambda_{\text{erg}}$. It is related to the geodesic flow on the quotient metric space $\Gamma \backslash X$. To be precise we denote by $\text{Geod}(X)$ the space of geodesic lines of $X$. The group $\Gamma$ acts by homeomorphisms on $\text{Geod}(X)$ and the quotient is denoted by $\textup{Proj-Geod}(X)$. For instance, it coincides with the space of local geodesics of $\Gamma\backslash X$ when $X$ is CAT$(0)$ and $\Gamma$ is torsion-free, see Remark \ref{rem:local-geodesics}. The natural action of $\mathbb{R}$ by time reparametrizations $\Phi_t$ on $\text{Geod}(X)$ descends to a well-defined flow $\Phi_t$ on $\textup{Proj-Geod}(X)$, called the geodesic flow. In the study of the dynamical system $(\textup{Proj-Geod}(\Gamma\backslash X), \Phi_1)$ it is classically relevant to study  $\Phi_1$-invariant probability measures that are ergodic.
The next result motivates the name of the ergodic limit set. 

\begin{theorem}
	\label{theo-intro:ergodic_measures_concentrated_on_ergodic_limit_set}
	Let $X$ be a proper, geodesic, $\delta$-hyperbolic space. Let $\Gamma < \textup{Isom}(X)$ be discrete. 
	Let $\mu$ be an ergodic, $\Phi_1$-invariant, probability measure on $\textup{Proj-Geod}(\Gamma\backslash X)$. Then $\mu$ is concentrated on the set of equivalence classes of geodesics with endpoints belonging to $\Lambda_{\text{erg}}$.
\end{theorem}

The results of Theorem \ref{theo:intro-packing-ergodic-set} and Theorem \ref{theo-intro:ergodic_measures_concentrated_on_ergodic_limit_set} will be used in \cite{Cav24} to provide another proof of \cite[§6 \& Remark 6.1]{DilsavorThompson2023}. Indeed, in case $X$ is CAT$(-1)$, the packing dimension of $\Lambda_\text{erg}$ is naturally related to the entropy of a measure as in the statement of Theorem \ref{theo-intro:ergodic_measures_concentrated_on_ergodic_limit_set}.

\section{Gromov-hyperbolic spaces}
\label{sec:Gromov_hyperbolic}
Let $(X,d)$ be a metric space. The open (resp.closed) ball of radius $r$ and center $x \in X$ is denoted by $B(x,r)$ (resp. $\overline{B}(x,r)$). If we need to specify the metric we will write $B_d(x,r)$ (resp. $\overline{B}_d(x,r))$.
A geodesic segment is an isometric embedding $\gamma\colon I \to X$ where $I=[a,b] \subseteq \mathbb{R}$ is a bounded interval. The points $\gamma(a), \gamma(b)$ are called the endpoints of $\gamma$. A metric space $X$ is called geodesic if for every couple of points $x,y\in X$ there exists a geodesic segment whose endpoints are $x$ and $y$. Every such geodesic segment will be denoted, with an abuse of notation, by $[x,y]$. A geodesic ray is an isometric embedding $\xi\colon[0,+\infty)\to X$ while a geodesic line is an isometric embedding $\gamma\colon \mathbb{R}\to X$. 
\vspace{1mm}

\noindent Let $X$ be a geodesic metric space and let $x,y,z \in X$. The {\em Gromov product} of $y$ and $z$ with respect to $x$ is defined as
\vspace{-3mm}

$$(y,z)_x = \frac{1}{2}\big( d(x,y) + d(x,z) - d(y,z) \big).$$

\noindent The space $X$ is called {\em $\delta$-hyperbolic} if   for every $x,y,z,w \in X$   the following {\em 4-points condition} hold:
\begin{equation}\label{hyperbolicity}
	(x,z)_w \geq \min\lbrace (x,y)_w, (y,z)_w \rbrace -  \delta 
\end{equation}

\vspace{-2mm}
\noindent  or, equivalently,
\vspace{-5mm}

\begin{equation}
	\label{four-points-condition}
	d(x,y) + d(z,w) \leq \max \lbrace d(x,z) + d(y,w), d(x,w) + d(y,z) \rbrace + 2\delta. 
\end{equation}

\noindent The space $X$ is   {\em Gromov hyperbolic} if it is $\delta$-hyperbolic for some $\delta \geq 0$. \\
We recall that Gromov-hyperbolicity should be considered as a negative-curvature condition at large scale: for instance every CAT$(\kappa)$ metric space, with $\kappa <0$ is $\delta$-hyperbolic for a constant $\delta$ depending only on $\kappa$. The converse is false, essentially because the CAT$(\kappa)$ condition controls the local geometry much better than the Gromov-hyperbolicity due to the convexity of the distance functions in such spaces (see for instance \cite{LN19}, \cite{CavS20} and \cite{CavS20bis}).

\subsection{Gromov boundary}
\label{subsubsec-boundary}
Let $X$ be a proper, $\delta$-hyperbolic metric space and let $x$ be a point of $X$. \\
The {\em Gromov boundary} of $X$ is defined as the quotient 
$$\partial X = \lbrace (z_n)_{n \in \mathbb{N}} \subseteq X \hspace{1mm} | \hspace{1mm}   \lim_{n,m \to +\infty} (z_n,z_m)_{x} = + \infty \rbrace \hspace{1mm} /_\sim,$$
where $(z_n)_{n \in \mathbb{N}}$ is a sequence of points in $X$ and $\sim$ is the equivalence relation defined by $(z_n)_{n \in \mathbb{N}} \sim (z_n')_{n \in \mathbb{N}}$ if and only if $\lim_{n,m \to +\infty} (z_n,z_m')_{x} = + \infty$.  \linebreak
We will write $ z = [(z_n)] \in \partial X$ for short, and we say that $(z_n)$ {\em converges} to $z$. This definition  does not depend on the basepoint $x$.
There is a natural topology on $X\cup \partial X$ that extends the metric topology of $X$. 

\noindent Every geodesic ray $\xi$ defines a point  $\xi^+=[(\xi(n))_{n \in \mathbb{N}}]$  of the Gromov boundary $ \partial X$: we  say that $\xi$ {\em joins} $\xi(0) = y$ {\em to} $\xi^+ = z$. Moreover for every $z\in \partial X$ and every $x\in X$ it is possible to find a geodesic ray $\xi$ such that $\xi(0)=x$ and $\xi^+ = z$. Indeed if $(z_n)$ is a sequence of points converging to $z$ then, by properness of $X$, the sequence of geodesics $[x,z_n]$ subconverges to a geodesic ray $\xi$ which has the properties above (cp. \cite[Lemma III.3.13]{BH09}). A geodesic ray joining $x$ to $z\in \partial X$ will be denoted by $\xi_{x,z}$ or simply $[x,z]$.
The relation between Gromov product and geodesic ray is highlighted in the following lemma.
\begin{lemma}[\text{\cite[Lemma 4.2]{Cav21ter}}]
	\label{product-rays}
	Let $X$ be a proper, $\delta$-hyperbolic metric space, $z,z'\in \partial X$, $x\in X$, $b>0$. Then
	\begin{itemize}
		\item[(i)] if $(z,z')_{x} \geq T$ then $d(\xi_{x,z}(T - \delta),\xi_{x,z'}(T - \delta)) \leq 4\delta$;
		\item[(ii)] if $d(\xi_{x,z}(T),\xi_{x,z'}(T)) < 2b$ then $(z,z')_{x} > T - b$.
	\end{itemize}
\end{lemma}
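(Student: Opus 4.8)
\emph{Overall strategy and Part (i).} The plan is to derive both items from the $4$-point inequality \eqref{hyperbolicity} applied to finite configurations, passing to the boundary by approximating $z$ and $z'$ with the points $\xi_{xz}(n)$, $\xi_{xz'}(n)$ of the corresponding rays and invoking the standard comparison between the Gromov product of two boundary points and the Gromov products computed along these rays. For (i), I would first establish a finite analogue: if $y,y'\in X$ and $0\le t\le\min\{(y,y')_x,\,d(x,y),\,d(x,y')\}$, then $d([x,y](t),[x,y'](t))\le 4\delta$ for any choice of geodesics $[x,y]$, $[x,y']$. Indeed, writing $a=[x,y](t)$ and $b=[x,y'](t)$, both points lie on geodesics issuing from $x$, so $(a,y)_x=d(x,a)=t$ and $(b,y')_x=d(x,b)=t$; two applications of \eqref{hyperbolicity}, with intermediate points $y$ and then $y'$, give $(a,b)_x\ge\min\{(a,y)_x,\,(y,y')_x,\,(y',b)_x\}-2\delta\ge t-2\delta$, hence $d(a,b)=d(x,a)+d(x,b)-2(a,b)_x=2t-2(a,b)_x\le 4\delta$. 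I would then apply this with $y=\xi_{xz}(s)$, $y'=\xi_{xz'}(s)$ for $s$ large, the subsegments $\xi_{xz}|_{[0,s]}$, $\xi_{xz'}|_{[0,s]}$ being honest geodesics, so that the finite statement is available for every parameter $t\le s$. It then suffices to bound $(\xi_{xz}(s),\xi_{xz'}(s))_x$ from below by $(z,z')_x-\delta$ for $s$ large: this follows by iterating \eqref{hyperbolicity} against a defining sequence realizing $(z,z')_x$ up to $\varepsilon$, together with the facts that $s\mapsto(\xi_{xz}(s),w)_x$ is non-decreasing and that $(\xi_{xz}(s),z_n)_x\to+\infty$ whenever $z_n\to z$ (with the normalization of the boundary Gromov product fixed in Section~\ref{subsubsec-boundary} this is exactly the loss producing the parameter $T-\delta$). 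Taking $t=T-\delta$ in the finite statement then yields $d(\xi_{xz}(T-\delta),\xi_{xz'}(T-\delta))\le 4\delta$.

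\emph{Part (ii).} Here I would argue directly, with no estimate on $\delta$. Set $\xi=\xi_{xz}$, $\xi'=\xi_{xz'}$. For every fixed $w\in X$ the function
\begin{equation*}
s\longmapsto (\xi(s),w)_x=\tfrac12\big(s+d(x,w)-d(\xi(s),w)\big)
\end{equation*}
is non-decreasing, since $d(x,\xi(s))=s$ grows at unit speed while $s\mapsto d(\xi(s),w)$ is $1$-Lipschitz. Applying this monotonicity in each variable separately, for all $s,s'\ge T$ one gets
\begin{equation*}
(\xi(s),\xi'(s'))_x\ \ge\ (\xi(T),\xi'(T))_x\ =\ T-\tfrac12\,d(\xi(T),\xi'(T))\ >\ T-b .
\end{equation*}
Passing to the $\liminf$ as $s,s'\to+\infty$ along the sequences $(\xi(n))_n\to z$ and $(\xi'(m))_m\to z'$, and recalling that $(z,z')_x$ dominates the $\liminf$ of the Gromov products along any pair of defining sequences, one concludes $(z,z')_x\ge(\xi(T),\xi'(T))_x>T-b$.

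\emph{Main obstacle.} Both items are pure manipulations of Gromov products via \eqref{hyperbolicity}; the only delicate point is the comparison between $(\xi_{xz}(s),\xi_{xz'}(t))_x$ and $(z,z')_x$ with the sharp additive constant $\delta$, needed at the end of Part (i) — this forces a careful iteration of \eqref{hyperbolicity} against a near-optimal defining sequence and attention to the conventions for the boundary Gromov product. Part (ii), by contrast, needs only the monotonicity of the Gromov product along a geodesic ray based at $x$.
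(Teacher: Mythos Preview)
The paper does not prove this lemma; it is quoted from \cite{Cav21} without argument, so there is no proof here to compare against. I will therefore assess your sketch on its own terms.

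Your argument for (ii) is clean and complete: the monotonicity of $s\mapsto(\xi(s),w)_x$ along a ray based at $x$ is exactly the right tool, and with the $\sup\liminf$ convention for the boundary Gromov product (which is what makes ``$(z,z')_x$ dominates the $\liminf$ along any defining sequences'' true) the conclusion $(z,z')_x\ge(\xi(T),\xi'(T))_x>T-b$ follows with no $\delta$-loss, matching the statement.

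For (i), your finite statement and its proof are correct, and reducing to it by taking $y=\xi_{xz}(s)$, $y'=\xi_{xz'}(s)$ is the right move. The point you flag as ``the only delicate point'' is indeed genuine, and I would push back slightly on your optimism: with the $\sup\liminf$ convention the standard comparison gives $\liminf_{s,t}(\xi_{xz}(s),\xi_{xz'}(t))_x\ge (z,z')_x-2\delta$, not $-\delta$, which via your finite lemma would yield $T-2\delta$ rather than $T-\delta$. Conversely, with the $\inf\liminf$ convention one gets no loss at all here (hence $T$), but then your argument for (ii) acquires a $\delta$-loss. So the exact constant $T-\delta$ in (i), together with the $\delta$-free (ii), does not fall out of either standard convention by your route alone; matching both simultaneously requires either the specific normalization used in \cite{Cav21} or a sharper comparison for Gromov products computed along geodesic rays (as opposed to arbitrary defining sequences). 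Your sketch is structurally sound, but to nail the stated constant you would need to pin down that convention or prove the sharper ray estimate explicitly, rather than gesture at ``careful iteration''.
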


\vspace{2mm}
\noindent The following is a standard computation, see \cite[Proposition 8.10]{BCGS} for instance.
\begin{lemma}
	\label{parallel-geodesics}
	Let $X$ be a proper, $\delta$-hyperbolic metric space. Then every two geodesic rays $\xi, \xi'$ with same endpoints at infinity are at distance at most $8\delta$, i.e. there exist $t_1,t_2\geq 0$ such that $t_1+t_2=d(\xi(0),\xi'(0))$ and  $d(\xi(t + t_1),\xi'(t+t_2)) \leq 8\delta$ for all $t\in \mathbb{R}$.
\end{lemma}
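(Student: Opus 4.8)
The plan is to reduce to the case of two rays sharing a basepoint, and then absorb the basepoint shift through a single triangle--comparison. Throughout write $z:=\xi^{+}=\xi'^{+}\in\partial X$, $x:=\xi(0)$, $x':=\xi'(0)$ and $d:=d(x,x')$.

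First I would record the common--basepoint case: if $\eta,\eta'$ are geodesic rays with $\eta(0)=\eta'(0)$ and $\eta^{+}=\eta'^{+}=z$, then $d(\eta(s),\eta'(s))\le 4\delta$ for every $s\ge 0$. This is Lemma \ref{product-rays}(i) applied with the two boundary points both taken equal to $z$: any two sequences converging to $z$ have Gromov product with respect to $\eta(0)$ tending to $+\infty$, so the hypothesis ``$(z,z)_{\eta(0)}\ge T$'' holds for every $T$, and its conclusion $d(\eta(T-\delta),\eta'(T-\delta))\le 4\delta$ then holds for every $T\ge\delta$, i.e.\ at every parameter $s\ge 0$. (Alternatively one derives this directly from \eqref{four-points-condition} applied to $\eta(0),\eta(n),\eta'(n)$ and a point of one of the rays, letting $n\to+\infty$.)

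For the general case I would compare $\xi'$ with the rays issuing from $x$. Fix a geodesic $[x,x']$ and put $p_{n}:=\xi'(n)$, so $p_{n}\to z$ and $d(x,p_{n})\to+\infty$; choose geodesics $g_{n}:=[x,p_{n}]$, which by properness (Lemma III.3.13 of \cite{BH09}) subconverge, uniformly on compacta, to a geodesic ray $\eta$ from $x$ to $z$. Set $a_{n}:=(x',p_{n})_{x}$ and $b_{n}:=(x,p_{n})_{x'}$; these are symmetric Gromov products of the \emph{same} point $p_{n}$, so $a_{n},b_{n}\ge 0$ and $a_{n}+b_{n}=d$ exactly. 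The geodesics $g_{n}$ (from $x$ to $p_{n}$) and $\xi'|_{[0,n]}$ (from $x'$ to $p_{n}$) issue from their common endpoint $p_{n}$ and, by the standard thinness estimate for geodesic triangles in a $\delta$-hyperbolic space, stay $4\delta$-close along an initial segment (measured from $p_{n}$) of length $(x,x')_{p_{n}}=d(x,p_{n})-a_{n}$; rewritten in the given parametrizations this says $d\bigl(g_{n}(a_{n}+s),\xi'(b_{n}+s)\bigr)\le 4\delta$ for all $0\le s\le d(x,p_{n})-a_{n}$, and since $b_{n}\le d$ this range exhausts $[0,+\infty)$ as $n\to+\infty$.

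Finally I would let $n\to+\infty$ along a single subsequence on which $g_{n}\to\eta$ and $a_{n}\to t_{1}\in[0,d]$ simultaneously (hence $b_{n}\to t_{2}:=d-t_{1}\in[0,d]$); the inequality of the previous step passes to the limit pointwise, giving $d(\eta(t_{1}+s),\xi'(t_{2}+s))\le 4\delta$ for all $s\ge 0$. Since $\xi$ and $\eta$ are geodesic rays from $x$ to $z$, the first step gives $d(\xi(t_{1}+s),\eta(t_{1}+s))\le 4\delta$, and the triangle inequality yields $d(\xi(t_{1}+s),\xi'(t_{2}+s))\le 8\delta$ for every $s\ge 0$, with $t_{1}+t_{2}=d=d(\xi(0),\xi'(0))$: this is the assertion with $t=s$. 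The only point needing care is this last double passage to the limit --- one must extract one subsequence realizing both the uniform convergence $g_{n}\to\eta$ and the convergence of the numbers $a_{n}$, and note that the fellow--travelling interval $[0,d(x,p_{n})-a_{n}]$ exhausts $[0,+\infty)$, so pointwise limits suffice; the constants then combine as $4\delta+4\delta=8\delta$. Everything else is the routine thin--triangle estimate, which is where the constant $4\delta$ (matching the one in Lemma \ref{product-rays}) enters.
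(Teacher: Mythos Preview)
Your argument is correct. The paper itself does not supply a proof of this lemma --- it simply records it as ``a standard computation, see \cite{BCGS}'' --- so there is nothing to compare your route against beyond checking the details, which are sound: the tripod/thin-triangle estimate in the $4$-point formulation of $\delta$-hyperbolicity does give the constant $4\delta$ for points at equal distance (up to the Gromov product) from a common vertex, your reparametrization $g_n(a_n+s)\leftrightarrow \xi'(b_n+s)$ is the right one, and the single diagonal extraction (for both $g_n\to\eta$ and $a_n\to t_1$) is exactly what is needed to pass to the limit.

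One small remark: you establish the inequality for $t=s\ge 0$, whereas the lemma as stated says ``for all $t\in\mathbb{R}$''. Since $\xi,\xi'$ are rays, the expression $d(\xi(t+t_1),\xi'(t+t_2))$ only makes sense for $t\ge -\min(t_1,t_2)$, and in every application in the paper the lemma is used only for $t\ge 0$; so this is a harmless imprecision in the statement rather than a gap in your proof. If you wanted to cover the remaining interval $[-\min(t_1,t_2),0]$ as well, you could invoke the other two legs of the same thin triangle (from the vertices $x$ and $x'$) before passing to the limit, but this is not needed for anything downstream.
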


\subsection{Visual metrics}
\label{subsubsec-visual-metrics}
When $X$ is a proper, $\delta$-hyperbolic metric space it is known that the boundary $\partial X$ is metrizable. A metric $D_{x,a}$ on $\partial X$ is called a {\em visual metric} of center $x \in X$ and parameter $a\in\left(0,\frac{1}{2\delta\cdot\log_2e}\right)$ if there exists $V> 0$ such that for all $z,z' \in \partial X$ it holds
\begin{equation}
	\label{visual-metric}
	\frac{1}{V}e^{-a(z,z')_{x}}\leq D_{x,a}(z,z')\leq V e^{-a(z,z')_{x}}.
\end{equation}
For every $a$ as before and every $x\in X$ there exists a visual metric of parameter $a$ and center $x$, see \cite{Pau96}.
As in \cite{Pau96} and \cite{Cav21ter} we define the {\em generalized visual ball} of center $z \in \partial X$ and radius $\rho \geq 0$ as
$$B(z,\rho) = \bigg\lbrace z' \in \partial X \text{ s.t. } (z,z')_{x} > \log \frac{1}{\rho} \bigg\rbrace.$$
It is comparable to the metric balls of the visual metrics on $\partial X$.
\begin{lemma}
	\label{comparison-balls}
	Let $D_{x,a}$ be a visual metric of center $x$ and parameter $a$ on $\partial X$. Then for every $z\in \partial X$ and for every $\rho>0$ it holds
	$$B_{D_{x,a}}\left(z, \frac{1}{V}\rho^a\right)  \subseteq B(z,\rho)\subseteq B_{D_{x,a}}(z, V\rho^a ).$$
\end{lemma}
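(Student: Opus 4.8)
The statement is obtained by directly unwinding the definitions of the visual metric and of the generalized visual ball, using only the positivity of the parameter $a$ and the defining inequality \eqref{visual-metric}; no geometric input beyond that comparison is needed.

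For the left inclusion I would start from $z'\in B_{D_{x,a}}\!\left(z,\frac{1}{V}\rho^a\right)$, that is $D_{x,a}(z,z')<\frac{1}{V}\rho^a$. The lower estimate in \eqref{visual-metric} then gives $\frac{1}{V}e^{-a(z,z')_{x}}\leq D_{x,a}(z,z')<\frac{1}{V}\rho^a$, hence $e^{-a(z,z')_{x}}<\rho^a$; taking logarithms and dividing by $a>0$ we get $(z,z')_{x}>\log\frac{1}{\rho}$, which is exactly the condition defining membership in $B(z,\rho)$. Conversely, if $z'\in B(z,\rho)$ then $(z,z')_{x}>\log\frac{1}{\rho}=-\log\rho$; multiplying by $a>0$ and exponentiating gives $e^{-a(z,z')_{x}}<\rho^a$, and the upper estimate in \eqref{visual-metric} yields $D_{x,a}(z,z')\leq V e^{-a(z,z')_{x}}<V\rho^a$, i.e. $z'\in B_{D_{x,a}}(z,V\rho^a)$.

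There is essentially no obstacle in this argument; the only care required is bookkeeping. One must check that the strict inequalities propagate correctly — each time the non-strict bound coming from \eqref{visual-metric} is combined with a strict one, the conclusion stays strict, as is appropriate for open balls — and one should observe that the extended $[0,+\infty]$-valued Gromov product $(z,z')_{x}$ appearing in the definition of $B(z,\rho)$ is the very same quantity featuring in \eqref{visual-metric}, so that no additional comparison constant enters. The degenerate situations (the case $z=z'$, where $(z,z')_{x}=+\infty$ and $D_{x,a}(z,z')=0$, or the case of large $\rho$, where both outer sets are all of $\partial X$ since non-negativity of Gromov products forces $D_{x,a}$ to have diameter at most $V$) are then handled automatically by the same chain of inequalities.
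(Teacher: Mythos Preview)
Your argument is correct and is exactly the direct unwinding of the definitions that the paper intends; the paper in fact states the lemma without proof, treating it as immediate from \eqref{visual-metric} and the definition of the generalized visual ball.
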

\noindent It is classical that generalized visual balls are related to shadows, whose definition is the following. Let $x\in X$ be a basepoint. The shadow of radius $r>0$ casted by a point $y\in X$ with center $x$ is the set:
$$\text{Shad}_x(y,r) = \lbrace z\in \partial X \text{ s.t. } [x,z]\cap B(y,r) \neq \emptyset \text{ for every ray } [x,z]\rbrace.$$
For our purposes we just need:
\begin{lemma}[\text{\cite[Lemma 4.8]{Cav21ter}}]
	\label{shadow-ball}
	Let $X$ be a proper, $\delta$-hyperbolic metric space. Let $z\in \partial X$, $x\in X$ and $T\geq 0$. Then for every $r>0$ it holds
	$$\textup{Shad}_{x}\left(\xi_{x,z}\left(T\right), r\right) \subseteq B(z, e^{-T + r}).$$
\end{lemma}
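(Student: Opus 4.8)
The plan is to unwind both sides to statements about the Gromov product and then invoke Lemma \ref{product-rays}(ii). Recall that, with $\rho = e^{-T+r}$, the generalized visual ball is $B(z,\rho) = \{z'\in\partial X : (z,z')_x > T-r\}$, so the claim is exactly: if $z'\in\textup{Shad}_x(\xi_z(T),r)$ then $(z,z')_x > T-r$. Note that $T\ge 0$, so $\xi_{xz}(T)$ is a well-defined point on the ray $\xi_{xz}=\xi_z$, at distance $T$ from $x$.

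So first I would fix $z'\in\textup{Shad}_x(\xi_z(T),r)$ and choose any geodesic ray $\xi_{xz'}$ from $x$ to $z'$. By definition of the shadow this ray meets the open ball $B(\xi_z(T),r)$, i.e.\ there is a parameter $s\ge 0$ with $d(\xi_{xz'}(s),\xi_z(T)) < r$. Since $\xi_{xz'}$ is issued from $x$ we have $d(x,\xi_{xz'}(s)) = s$ and $d(x,\xi_z(T)) = T$, so the triangle inequality gives $|s-T|\le d(\xi_{xz'}(s),\xi_z(T)) < r$.

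Next I would \emph{slide the contact point to the common parameter} $T$: using that $\xi_{xz'}$ is an isometric embedding,
$$d\big(\xi_z(T),\xi_{xz'}(T)\big)\ \le\ d\big(\xi_z(T),\xi_{xz'}(s)\big) + d\big(\xi_{xz'}(s),\xi_{xz'}(T)\big)\ <\ r + |s-T|\ <\ 2r.$$
Now Lemma \ref{product-rays}(ii), applied with the parameter $T$ and $b=r$, yields $(z,z')_x > T-r$, which is precisely $z'\in B(z,e^{-T+r})$. Since $z'$ was an arbitrary point of the shadow, this proves the inclusion.

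There is essentially no serious obstacle here; the only point requiring a little care is that the shadow only provides a contact point at some parameter $s$ close to $T$ but not necessarily equal to it, which is why the intermediate step moving from $\xi_{xz'}(s)$ to $\xi_{xz'}(T)$ is needed, and why the radius $2r$ (rather than something smaller) appears in the application of Lemma \ref{product-rays}(ii). One should also check the degenerate range $T<r$, but there $T-r<0$ while Gromov products of boundary points are non-negative, so the inclusion is automatic. The independence of $(z,z')_x$ from the chosen rays $\xi_{xz},\xi_{xz'}$ guarantees that the particular ray used to detect the shadow is a legitimate choice in Lemma \ref{product-rays}(ii).
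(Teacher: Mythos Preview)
The paper states Lemma \ref{shadow-ball} without proof, treating it as a standard consequence of the relation between Gromov products and rays. Your argument is correct and is exactly the natural one: unwinding the definition of the generalized visual ball, picking a contact point $\xi_{xz'}(s)$ of the ray $[x,z']$ with the shadowed ball, sliding it along the ray to parameter $T$ at cost $|s-T|<r$, and then invoking Lemma \ref{product-rays}(ii) with $b=r$. The only nontrivial point is precisely the one you flag, namely that the contact occurs at a parameter $s$ which need not equal $T$, and your triangle-inequality step handles it cleanly.

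Two minor remarks. First, the degenerate-range discussion is unnecessary: your inequality $d(\xi_z(T),\xi_{xz'}(T))<2r$ already holds and Lemma \ref{product-rays}(ii) applies regardless of the sign of $T-r$. Second, your closing comment about independence from the chosen rays is well placed: Lemma \ref{product-rays}(ii) is stated for arbitrary rays $\xi_{xz},\xi_{xz'}$ with the given endpoints, so the particular $\xi_z$ used to define the shadow and the particular $\xi_{xz'}$ you picked are legitimate inputs.
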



\section{Hausdorff and Packing dimensions}
\label{sec-HP-dimensions}
In this section we recall briefly the definitions of Hausdorff and packing dimensions of a subset of a metric space. Then we will adapt these constructions and results to the case of the boundary at infinity of a $\delta$-hyperbolic metric space. The facts presented here are classical and can be found easily in literature.

\subsection{Definitions of Hausdorff and Packing dimensions}
Let $(X,d)$ be a metric space and $\alpha \geq 0$. The $\alpha$-Hausdorff measure of a Borel subset $B\subset X$ is defined as
$$\mathcal{H}^\alpha_d(B) = \lim_{\eta \to 0}\inf \left\lbrace \sum_{i\in \mathbb{N}} r_i^\alpha \text{ s.t. } B\subseteq \bigcup_{i\in \mathbb{N}}B(x_i,r_i) \text{ and } r_i\leq \eta\right\rbrace.$$
The argument of the limit is increasing when $\eta$ tends to $0$, so the limit exists. This formula actually defines a Borel measure on $X$.
To be precise what we introduced is the definition of the spherical Hausdorff measure. It is comparable to the classical Hausdorff measure. The Hausdorff dimension of a Borel subset $B$ of $X$, denoted HD$_d(B)$, is the unique real number $\alpha \geq 0$ such that $\mathcal{H}^{\alpha'}_d(B) = 0$ for every $\alpha' > \alpha$ and $\mathcal{H}^{\alpha'}_d(B) = +\infty$ for every $\alpha' < \alpha$. 

\vspace{1mm}
\noindent The packing dimension is defined in a similar way, but using disjoint balls inside $B$ instead of coverings. For every $\alpha \geq 0$ and for every Borel subset $B$ of $X$ we define
$$\mathcal{P}^\alpha_d(B) = \lim_{\eta \to 0} \sup\left\lbrace \sum_{i\in \mathbb{N}} r_i^\alpha \text{ s.t. } B(x_i,r_i) \text{ are disjoint, }x_i\in B \text{ and } r_i\leq \eta\right\rbrace.$$
This is not a measure on $X$ but only a pre-measure. By a standard procedure one can define the $\alpha$-Packing measure as
$$\hat{\mathcal{P}}_d^\alpha(B) = \inf\left\lbrace \sum_{k=1}^\infty \mathcal{P}_d^\alpha(B_k) \text{ s.t. } B\subseteq \bigcup_{k=1}^\infty B_k, \, B_k \text{ Borel}\right\rbrace.$$
The packing dimension of a Borel subset $B\subseteq X$, denoted PD$_d(B)$, is the unique real number $\alpha \geq 0$ such that $\hat{\mathcal{P}}^{\alpha'}_d(B) = 0$ for every $\alpha' > \alpha$ and $\hat{\mathcal{P}}^{\alpha'}_d(B) = +\infty$ for every $\alpha' < \alpha$. \\
The packing dimension has another useful interpretation (cp \cite[Proposition 3.8]{Fal04}): for every Borel subset $B\subseteq X$ we have
\begin{equation}
	\label{eq-pack-MD}
	\text{PD}_d(B) = \inf\left\lbrace \sup_k \overline{\text{MD}}_d(B_k) \text{ s.t. } B\subseteq \bigcup_{k=1}^\infty B_k,\, B_k \text{ Borel}\right\rbrace.
\end{equation}
The quantity $\overline{\text{MD}}_d$ denotes the upper Minkowski dimension, namely:
\begin{equation}
	\label{MD-definition}
	\overline{\text{MD}}_d(B) = \limsup_{r \to 0}\frac{\log\text{Cov}_d(B,r)}{\log\frac{1}{r}},
\end{equation}
where $B$ is any subset of $X$ and $\text{Cov}_d(B,r)$ denotes the minimal number of $d$-balls of radius $r$ needed to cover $B$. Taking the limit inferior in place of the limit superior in \eqref{MD-definition} one defines the lower Minkowski dimension of $B$, denoted $\underline{\text{MD}}_d(B)$.

\subsection{Visual dimensions}
Let $X$ be a proper, $\delta$-hyperbolic metric space and let $x\in X$. The boundary at infinity $\partial X$ supports several visual metrics $D_{x,a}$, so the Hausdorff dimension, the packing dimension and the Minkowski dimension of subsets of $\partial X$ are well defined with respect to $D_{x,a}$. There is a way to define universal versions of these quantities that do not depend neither on $x$ nor on $a$.
Fix $\alpha \geq 0$. For a Borel subset $B$ of $\partial X$ we set, following \cite{Pau96},
$$\mathcal{H}^\alpha(B) = \lim_{\eta \to 0}\inf \left\lbrace \sum_{i\in \mathbb{N}} \rho_i^\alpha \text{ s.t. } B\subseteq \bigcup_{i\in \mathbb{N}}B(z_i,\rho_i) \text{ and } \rho_i\leq \eta\right\rbrace,$$
where $B(z_i,\rho_i)$ are generalized visual balls.
As in the classical case the {\em{ visual Hausdorff dimension}} of $B$ is defined as the unique $\alpha \geq 0$ such that $\mathcal{H}^{\alpha'}(B) = 0$ for every $\alpha' > \alpha$ and $\mathcal{H}^{\alpha'}(B) = +\infty$ for every $\alpha'<\alpha$. The visual Hausdorff dimension of the Borel subset $B$ is denoted by HD$(B)$. By Lemma \ref{comparison-balls}, see also \cite{Pau96}, we have HD$(B) = a\cdot\text{HD}_{D_{x,a}}(B)$ for every visual metric $D_{x,a}$ of center $x$ and parameter $a$.
\vspace{2mm}

\noindent In the same way we can define the visual $\alpha$-packing pre-measure of a Borel subset $B$ of $\partial X$ by
$$\mathcal{P}^\alpha(B) = \lim_{\eta \to 0} \sup\left\lbrace \sum_{i\in \mathbb{N}} \rho_i^\alpha \text{ s.t. } B(z_i,\rho_i) \text{ are disjoint, }x_i\in B \text{ and } \rho_i\leq \eta\right\rbrace,$$
where $B(z_i,\rho_i)$ are again generalized visual balls. As usual we can define the visual $\alpha$-packing measure by
$$\hat{\mathcal{P}}^\alpha(B) = \inf\left\lbrace \sum_{k=1}^\infty \mathcal{P}^\alpha(B_k) \text{ s.t. } B\subseteq \bigcup_{k=1}^\infty B_k,\, B_k \text{ Borel}\right\rbrace.$$
Consequently it is defined the visual packing dimension of a Borel set $B$, denoted by PD$(B)$. Using Lemma \ref{comparison-balls} as in the case of the Hausdorff measure (see \cite{Pau96}) one can check that for every visual metric $D_{x,a}$ of center $x$ and parameter $a$ it holds:
$$\frac{1}{V^a} \hat{\mathcal{P}}_{D_{x,a}}^{\frac{\alpha}{a}}(B) \leq \hat{\mathcal{P}}^\alpha(B) \leq V^a \hat{\mathcal{P}}_{D_{x,a}}^{\frac{\alpha}{a}}(B)$$
for every $\alpha \geq 0$ and every Borel subset $B\subseteq \partial X$. Therefore for every Borel set $B$ it holds PD$(B)=a\cdot \text{PD}_{D_{x,a}}(B)$.
\vspace{2mm}

\noindent Using generalized visual balls, instead of metric balls with respect to a visual metric, one can define the visual upper and lower Minkowski dimension of a subset $B\subseteq \partial X$:
$$\overline{\text{MD}}(B) = \limsup_{\rho \to 0}\frac{\log \text{Cov}(B,\rho)}{\log \rho}, \qquad \underline{\text{MD}}(B) = \liminf_{\rho \to 0}\frac{\log \text{Cov}(B,\rho)}{\log \rho},$$
where $\text{Cov}(B,\rho)$ denotes the minimal number of generalized visual balls of radius $\rho$ needed to cover $B$. 
Using again Lemma \ref{comparison-balls} one has $\overline{\text{MD}}(B) = a\cdot  \overline{\text{MD}}_{D_{x,a}}(B)$ for every Borel set $B$ and every visual metric of center $x$ and parameter $a$. The same holds for the lower Minkowski dimension.
\vspace{2mm}

\noindent It is easy to check that for every Borel set $B$ of $\partial X$ the numbers HD$(B)$, PD$(B)$, $\underline{\text{MD}}(B)$, $\overline{\text{MD}}(B)$ do not depend on $x$, see \cite[Proposition 6.4]{Pau96}, and their definition is independent also on $a$.
Using the classical facts holding for metric spaces we get
\begin{equation}
	\text{HD}(B) \leq \text{PD}(B) \leq \underline{\text{MD}}(B) \leq \overline{\text{MD}}(B)
\end{equation}
and 
\begin{equation}
	\label{PD-supMD}
	\text{PD}(B) = \inf\left\lbrace \sup_k \overline{\text{MD}}(B_k) \text{ s.t. } B\subseteq \bigcup_{k=1}^\infty B_k,\,B_k \text{ Borel}\right\rbrace.
\end{equation}
for every Borel subset $B$ of $\partial X$.
\vspace{2mm}

\section{Limit sets of discrete groups of isometries}
\label{sec-Bishop_Jones}
If $X$ is a proper metric space we denote its group of isometries by Isom$(X)$ and we endow it with the uniform convergence on compact subsets of $X$. A subgroup $\Gamma$ of Isom$(X)$ is called {\em discrete} if the following equivalent conditions hold:
\begin{itemize}
	\item[(a)] $\Gamma$ is discrete as a subspace of Isom$(X)$; \vspace{-2mm}
	\item[(b)] $\forall x\in X$ and $R\geq 0$ the set $\Sigma_R(x) = \lbrace g \in \Gamma  \text{ s.t. }  g  x\in \overline{B}(x,R)\rbrace$ is finite.
\end{itemize}   

\noindent The critical exponent of a discrete group of isometries $\Gamma$ acting on a proper metric space $X$ can be defined using the Poincaré series, or alternatively (\cite{Cav21ter}, \cite{Coo93}), as
$$\overline{h_\Gamma}(X) = \limsup_{T \to +\infty}\frac{1}{T}\log \# (\Gamma x \cap B(x,T)),$$
where $x$ is a fixed point of $X$. This quantity does not depend on the choice of $x$. In the following we will often write $\overline{h_\Gamma}(X)=:h_\Gamma$.
Taking the limit inferior instead of the limit superior we define the lower critical exponent, denoted by $\underline{h_\Gamma}(X)$. In \cite{Rob02} it is proved that if $\Gamma$ is a discrete, non-elementary group of isometries of a CAT$(-1)$ space then $\overline{h_\Gamma}(X) = \underline{h_\Gamma}(X)$. Theorem \ref{theo-intro-Roblin} generalizes this result to proper, $\delta$-hyperbolic spaces.
\vspace{1mm}

\noindent We specialize the situation to the case of a proper, $\delta$-hyperbolic metric space $X$. Every isometry of $X$ acts naturally on $\partial X$ and the resulting map on $X\cup \partial X$ is a homeomorphism.
The {\em limit set} $\Lambda(\Gamma)$ of a discrete group of isometries $\Gamma$ is the set of accumulation points of the orbit $\Gamma x$ on $\partial X$, where $x$ is any point of $X$; it is the smallest $\Gamma$-invariant closed set of the Gromov boundary (cp. \cite[Theorem 5.1]{Coo93}) and it does not depend on $x$.\\
There are several interesting subsets of the limit set: the radial limit set, the uniformly radial limit set, etc. They are related to important sets of the geodesic flow on the quotient space $\Gamma \backslash X$. We will see an instance in the second part of the paper. In order to recall their definiton we need to introduce a more general class of subsets of $\partial X$.\\
We fix a basepoint $x\in X$. Let $\tau$ and $\Theta = \lbrace \vartheta_i \rbrace_{i\in \mathbb{N}}$ be, respectively, a positive real number and an increasing sequence of real numbers with $\lim_{i \to +\infty}\vartheta_i = +\infty$. We define
$\Lambda_{\tau, \Theta}(\Gamma)$ as the set of points $z\in \partial X$ such that there exists a geodesic ray $[x,z]$ satisfying the following: for every $i\in \mathbb{N}$ there exists a point $y_i \in [x,z]$ with $d(x,y_i) \in [\vartheta_i, \vartheta_{i+1}]$ such that $d(y_i,\Gamma x) \leq \tau$. We observe that up to change $\tau$ with $\tau + 8\delta$ the definition above does not depend on the choice of the geodesic ray $[x,z]$, by Lemma \ref{parallel-geodesics}.
\begin{lemma}
	In the situation above it holds:
	\begin{itemize}
		\item[(i)] $\Lambda_{\tau, \Theta}(\Gamma) \subseteq \Lambda(\Gamma)$;
		\item[(ii)] the set $\Lambda_{\tau, \Theta}(\Gamma)$ is closed. 
	\end{itemize}
\end{lemma}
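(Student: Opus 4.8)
The plan is to prove (i) and (ii) separately, both relying on properness of $X$ together with the observation that discreteness of $\Gamma$ (condition (b) above) makes every orbit $\Gamma x$ a locally finite, hence closed, subset of $X$ on which $w\mapsto d(w,\Gamma x)$ is a finite-valued $1$-Lipschitz function attaining its infimum.

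For (i), I would start from a point $z\in\Lambda_{\tau,\Theta}(\Gamma)$ together with a witnessing ray $\xi=[x,z]$ and points $y_i\in\xi$ with $d(x,y_i)\in[\vartheta_i,\vartheta_{i+1}]$ and $d(y_i,\Gamma x)\le\tau$. Since $d(x,y_i)\ge\vartheta_i\to+\infty$ and the $y_i$ lie on $\xi$, we have $y_i\to z$ in $X\cup\partial X$, equivalently $(y_i,z)_x\to+\infty$. Pick $g_i\in\Gamma$ with $d(y_i,g_ix)\le\tau$; then $d(x,g_ix)\ge d(x,y_i)-\tau\to+\infty$, so the set $\Gamma x$ is infinite and accumulates only on $\partial X$. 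Using that the extended Gromov product satisfies $|(g_ix,z)_x-(y_i,z)_x|\le d(g_ix,y_i)+O(\delta)\le\tau+O(\delta)$, we get $(g_ix,z)_x\to+\infty$, i.e. $g_ix\to z$. Hence $z$ is an accumulation point in $\partial X$ of $\Gamma x$, that is $z\in\Lambda(\Gamma)$.

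For (ii), I would take $z_n\in\Lambda_{\tau,\Theta}(\Gamma)$ with $z_n\to z\in\partial X$ and construct a witness for $z$. Fix for each $n$ a witnessing ray $\xi_n=[x,z_n]$ and points $y_i^n\in\xi_n$ with $t_i^n:=d(x,y_i^n)\in[\vartheta_i,\vartheta_{i+1}]$ and $d(y_i^n,\Gamma x)\le\tau$. By properness and Arzel\`a--Ascoli the rays $\xi_n$ subconverge, uniformly on compact sets, to a geodesic ray $\xi$ with $\xi(0)=x$. For each fixed $i$ the parameters $t_i^n$ lie in the compact interval $[\vartheta_i,\vartheta_{i+1}]$, so after a diagonal extraction $t_i^n\to t_i\in[\vartheta_i,\vartheta_{i+1}]$, and uniform convergence on $[0,\vartheta_{i+1}]$ gives $y_i^n=\xi_n(t_i^n)\to\xi(t_i)=:y_i\in\xi$. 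Since $d(\cdot,\Gamma x)$ is $1$-Lipschitz and $\Gamma x$ is closed, $d(y_i,\Gamma x)=\lim_n d(y_i^n,\Gamma x)\le\tau$, while $d(x,y_i)=t_i\in[\vartheta_i,\vartheta_{i+1}]$. Thus $\xi$ and $(y_i)_{i\in\mathbb N}$ witness $z\in\Lambda_{\tau,\Theta}(\Gamma)$, provided one knows $\xi^+=z$.

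The main obstacle is exactly that last point, $\xi^+=z$ for the limiting ray in (ii): uniform convergence on compacts of rays does not by itself force convergence of endpoints, so hyperbolicity must be used. Concretely, for fixed $T$ the point $\xi_n(T)$ lies on $[x,z_n]$, so $(\xi_n(T),z_n)_x\ge T-2\delta$; by the (up to $O(\delta)$) lower semicontinuity of the extended Gromov product under $\xi_n(T)\to\xi(T)$ and $z_n\to z$ we obtain $(\xi(T),z)_x\ge T-C\delta$, and letting $T\to+\infty$ gives $(\xi^+,z)_x=+\infty$, i.e. $\xi^+=z$. The rest is routine, as are the Gromov-product estimates in (i). I would also note, as the surrounding text already does, that replacing $\tau$ by $\tau+8\delta$ (Lemma \ref{parallel-geodesics}) removes any dependence on the choice of witnessing ray, which is what makes both arguments robust under passing to a limit ray.
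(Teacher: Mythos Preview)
Your proposal is correct and follows essentially the same route as the paper: for (ii) you take witnessing rays, pass to a subsequential limit ray by Arzel\`a--Ascoli, and push the witness points $y_i^n$ to limits on the new ray. The paper simply asserts that the limit ray is a ray $[x,z^\infty]$ and that the $y_i^k$ converge, whereas you spell out the diagonal extraction and, more importantly, justify $\xi^+=z$ via Gromov-product estimates; this is the one point the paper treats as known background (it is the standard fact, implicit in the reference to \cite[Lemma III.3.13]{BH09} earlier in the text, that limits of rays $[x,z_n]$ with $z_n\to z$ are rays $[x,z]$). Your detailed argument for (i) is also fine, though the paper dismisses it as obvious.
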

\begin{proof}
	The first statement is obvious, so we focus on (ii). Let $z^k \in \Lambda_{\tau, \Theta}(\Gamma)$ be a sequence converging to $z^\infty$. Let $\xi^k = [x,z^k]$ be a geodesic ray as in the definition of $\Lambda_{\tau, \Theta}(\Gamma)$. We know that, up to a subsequence, the sequence $\xi^k$ converges uniformly on compact sets of $[0,+\infty)$ to a geodesic ray $\xi^\infty = [x,z^\infty]$. We fix $i\in \mathbb{N}$ and we take points $y_i^k$ with $d(x,y_i^k)\in [\vartheta_{i}, \vartheta_{i+1}]$ and $d(y_i^k,\Gamma x)\leq \tau$. The sequence $y_i^k$ converges to a point $y_i^\infty \in [x,z^\infty]$ with $d(x,y_i^\infty) \in [\vartheta_{i}, \vartheta_{i+1}]$. Moreover clearly $d(y_i^\infty, \Gamma x) \leq \tau$. Since this is true for every $i\in \mathbb{N}$ we conclude that $z^\infty \in \Lambda_{\tau, \Theta}(\Gamma)$.
\end{proof}
\noindent We can now introduce some interesting subsets of the limit set of $\Gamma$. Let $\Theta_\text{rad}$ be the set of increasing, unbounded sequences of real numbers.
The \emph{radial limit set} is classically defined as 
$$\Lambda_\text{rad}(\Gamma) = \bigcup_{\tau \geq 0}\bigcup_{\Theta \in \Theta_\text{rad}}\Lambda_{\tau, \Theta}(\Gamma).$$
The \emph{uniform radial limit set} is defined (see \cite{DSU17}) as
$$\Lambda_\text{u-rad}(\Gamma) = \bigcup_{\tau \geq 0}\Lambda_{\tau}(\Gamma),$$
where $\Lambda_\tau(\Gamma)=\Lambda_{\tau, \lbrace i\tau\rbrace}(\Gamma)$.\\
Another interesting set that is what we call the \emph{ergodic limit set}, defined as:
$$\Lambda_\text{erg}(\Gamma) = \bigcup_{\tau \geq 0}\bigcup_{\Theta \in \Theta_\text{erg}}\Lambda_{\tau, \Theta}(\Gamma),$$
where a sequence $\Theta = \lbrace\vartheta_i\rbrace$ belongs to $\Theta_\text{erg}$ if $\exists\lim_{i\to +\infty} \frac{\vartheta_{i}}{i} \in (0,+\infty)$. 
The name is justified by Theorem \ref{theo-intro:ergodic_measures_concentrated_on_ergodic_limit_set} stating that every ergodic measure which is invariant by the geodesic flow on $\Gamma\backslash X$ is concentrated on geodesics whose endpoints belong to $\Lambda_{\text{erg}}$.\\
When $\Gamma$ is clear in the context, we will simply write $\Lambda_{\tau, \Theta}, \Lambda_{\text{rad}}, \Lambda_{\text{u-rad}}, \Lambda_{\textup{erg}}, \Lambda$, omitting $\Gamma$.
\begin{lemma}
	\label{lemma:gamma-invariance-of-limit-sets}
	In the situation above the sets $\Lambda_{\textup{rad}}, \Lambda_{\textup{u-rad}}$ and $\Lambda_{\textup{erg}}$ are $\Gamma$-invariant and do not depend on $x$.
\end{lemma}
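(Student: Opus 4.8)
The statement to prove is that $\Lambda_{\textup{rad}}$, $\Lambda_{\textup{u-rad}}$ and $\Lambda_{\textup{erg}}$ are $\Gamma$-invariant and do not depend on the basepoint $x$. The plan is to prove both assertions simultaneously by showing that the building blocks $\Lambda_{\tau,\Theta}(\Gamma)$ behave well under (i) change of basepoint and (ii) the action of $\Gamma$, both only up to enlarging $\tau$ by a bounded amount; since all three sets are defined as unions over $\tau\ge 0$ (and over classes of sequences $\Theta$ that are stable under the relevant operations), enlarging $\tau$ by a constant does not change the union.

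\emph{Independence of basepoint.} Fix two basepoints $x,x'\in X$ and set $R=d(x,x')$. Let $z\in\Lambda_{\tau,\Theta}(\Gamma)$ witnessed by a ray $[x,z]$ with points $y_i\in[x,z]$, $d(x,y_i)\in[\vartheta_i,\vartheta_{i+1}]$, $d(y_i,\Gamma x)\le\tau$. First I would pass from $\Gamma x$ to $\Gamma x'$: for each $i$ pick $g_i\in\Gamma$ with $d(y_i,g_ix)\le\tau$, so $d(y_i,g_ix')\le\tau+R$; thus $d(y_i,\Gamma x')\le\tau+R$. Next I would replace the ray $[x,z]$ by a ray $[x',z]$: by Lemma \ref{parallel-geodesics} (its ray version, which follows from the standard argument cited there, or directly from Lemma III.3.3-type estimates) any two rays with the same endpoint at infinity stay within $8\delta$ of each other after a bounded reparametrization, so for each $y_i$ there is a point $y_i'\in[x',z]$ with $d(y_i,y_i')\le 8\delta+R$ (the extra $R$ absorbing the shift of parameters coming from the different starting points). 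Then $d(x',y_i')\in[\vartheta_i-R-8\delta,\vartheta_{i+1}+R+8\delta]$ and $d(y_i',\Gamma x')\le\tau+2R+8\delta$. The parameter windows are slightly off, but after discarding finitely many indices and relabelling (a sequence $\Theta'$ with $\vartheta_i'=\vartheta_i-R-8\delta$ lies in $\Theta_{\mathrm{rad}}$, $\Theta_{\mathrm{erg}}$ whenever $\Theta$ does, and for $\Lambda_{\mathrm{u-rad}}$ the windows $[i\tau,(i+1)\tau]$ can be recovered after increasing $\tau$) one gets $z\in\Lambda_{\tau',\Theta'}(\Gamma)$ for suitable $\tau',\Theta'$ of the same type. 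Taking the union over $\tau$ (and over the admissible $\Theta$) therefore yields the same set for $x$ and $x'$.

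\emph{$\Gamma$-invariance.} Fix $g\in\Gamma$ and $z\in\Lambda_{\tau,\Theta}(\Gamma)$ (with basepoint $x$), witnessed as above. Applying $g$: the ray $g[x,z]=[gx,gz]$ has points $gy_i$ with $d(gx,gy_i)=d(x,y_i)\in[\vartheta_i,\vartheta_{i+1}]$ and $d(gy_i,\Gamma x)=d(y_i,g^{-1}\Gamma x)=d(y_i,\Gamma x)\le\tau$ since $\Gamma$ is a group. So $gz\in\Lambda_{\tau,\Theta}(\Gamma)$ computed with basepoint $gx$; combining with the basepoint-independence established above, $gz\in\Lambda_{\tau',\Theta'}(\Gamma)$ with basepoint $x$ for appropriate $\tau',\Theta'$ of the same type, hence $gz$ lies in the same union. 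Since this holds for every $g$ and every $z$, each of $\Lambda_{\mathrm{rad}},\Lambda_{\mathrm{u-rad}},\Lambda_{\mathrm{erg}}$ is $\Gamma$-invariant.

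\emph{Main obstacle.} The only delicate point is bookkeeping the parameter windows $[\vartheta_i,\vartheta_{i+1}]$ under the $8\delta+R$ shift when changing basepoint: one must check that the perturbed sequence still belongs to the relevant class ($\Theta_{\mathrm{rad}}$, $\{i\tau\}$ up to increasing $\tau$, or $\Theta_{\mathrm{erg}}$), and in the $\Lambda_{\mathrm{u-rad}}$ case that rescaling $\tau$ is enough to restore uniformly spaced windows. For $\Theta_{\mathrm{erg}}$ this is immediate because a bounded additive shift does not affect $\lim\vartheta_i/i$; for $\Lambda_{\mathrm{u-rad}}$ one notes that a point hit within $8\delta+R$ of each $[i\tau,(i+1)\tau]$-window is hit within $\tau''$ of every $[i\tau'']$-window once $\tau''$ is chosen large enough, which is the content of the elementary estimate already used implicitly in the remark following the definition of $\Lambda_{\tau,\Theta}(\Gamma)$. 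Everything else is routine.
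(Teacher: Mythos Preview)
Your proposal is correct and follows essentially the same route as the paper: use Lemma~\ref{parallel-geodesics} to compare a ray $[x,z]$ with a ray $[x',z]$, deduce that membership in $\Lambda_{\tau,\Theta}$ is preserved up to enlarging the parameters by a constant depending only on $d(x,x')$ and $\delta$, and then observe that the unions over $\tau$ (and over the relevant classes of $\Theta$) are unaffected. Your reduction of $\Gamma$-invariance to basepoint-independence via the action of $g$ is exactly what the paper leaves implicit.

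There is one simplification in the paper worth noting. Instead of taking $y_i'$ to be the point on $[x',z]$ that is $8\delta$-close to $y_i$ after reparametrization (which shifts the parameter window and forces you to track a perturbed sequence $\Theta'$), the paper uses the consequence $d(\xi(t),\xi'(t))\le d(x,x')+8\delta$ for \emph{every} $t\ge 0$ and simply sets $y_i'=\xi(s_i)$ with the \emph{same} parameter $s_i=d(x,y_i)$. Then $d(x',y_i')=s_i\in[\vartheta_i,\vartheta_{i+1}]$ exactly, so $\Theta$ does not change at all and only $\tau$ needs to be enlarged. This eliminates precisely the bookkeeping you flag as the ``main obstacle'' (checking that shifted windows still define a sequence in $\Theta_{\mathrm{rad}}$, $\Theta_{\mathrm{erg}}$, or $\{i\tau\}$), and in particular makes the $\Lambda_{\mathrm{u\text{-}rad}}$ case no harder than the others. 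Your argument is not wrong, just slightly more laborious than necessary.
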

\begin{proof}
	Let $y$ be another point of $X$ and let $z\in \partial X$. By Lemma \ref{parallel-geodesics} for every couple of geodesic rays $\xi = [y,z]$, $\xi' = [x,z]$ there are $t_1,t_2\geq 0$ such that $t_1+t_2\leq d(x,y)$ and $d(\xi(t+t_1), \xi'(t+t_2))\leq 8\delta$. This means that $d(\xi(t), \xi'(t)) \leq d(x,y) + 8\delta$ for every $t\geq 0$. It is then straightforward to see that if $z\in \Lambda_{\tau, \Theta}$ (as defined with respect to $x$) then it belongs to $\Lambda_{\tau + d(x,y) + 8\delta, \Theta}$ as defined with respect to $y$. This shows the thesis.
\end{proof}

\section{Bishop-Jones' Theorem}
\label{subsec:BJ_proof}
The celebrated Bishop-Jones' Theorem, in the general version of \cite{DSU17}, states the following:
\begin{theo}[\cite{BJ97}, \cite{Pau97}, \cite{DSU17}]
	\label{Bishop-Jones}
	Let $X$ be a proper, $\delta$-hyperbolic metric space and let $\Gamma < \textup{Isom}(X)$ be discrete and non-elementary. Then
	$$h_\Gamma = \textup{HD}(\Lambda_{\textup{rad}}) = \textup{HD}(\Lambda_{\textup{u-rad}}) = \sup_{\tau \geq 0} \textup{HD}(\Lambda_{\tau}).$$
\end{theo}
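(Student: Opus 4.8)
The plan is to prove the chain of inequalities
\[
h_\Gamma \leq \textup{HD}(\Lambda_{\textup{rad}}), \qquad \textup{HD}(\Lambda_{\textup{rad}}) = \textup{HD}(\Lambda_{\textup{u-rad}}), \qquad \sup_{\tau \geq 0}\textup{HD}(\Lambda_\tau) \leq h_\Gamma,
\]
together with the trivial inclusions $\Lambda_\tau \subseteq \Lambda_{\textup{u-rad}} \subseteq \Lambda_{\textup{rad}}$, which give $\sup_{\tau}\textup{HD}(\Lambda_\tau) \leq \textup{HD}(\Lambda_{\textup{u-rad}}) \leq \textup{HD}(\Lambda_{\textup{rad}})$ for free. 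Since $\Lambda_{\textup{u-rad}} = \bigcup_{\tau \geq 0}\Lambda_\tau$ is a countable union (it suffices to take $\tau \in \mathbb{N}$, enlarging $\tau$ by a bounded amount), Hausdorff dimension being countably stable gives $\textup{HD}(\Lambda_{\textup{u-rad}}) = \sup_\tau \textup{HD}(\Lambda_\tau)$, so the real content is the two outer inequalities $h_\Gamma \leq \textup{HD}(\Lambda_{\textup{rad}})$ (the hard, lower-bound direction) and $\textup{HD}(\Lambda_\tau) \leq h_\Gamma$ for each fixed $\tau$ (the easy, upper-bound direction).

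For the upper bound, fix $\tau \geq 0$ and a basepoint $x$. For each $n$, cover $\Lambda_\tau$ by the generalized visual shadows cast by the orbit points of $\Gamma x$ lying in the annulus $\overline{B}(x, n\tau + \tau)\setminus B(x, n\tau)$ that are within distance $\tau$ of some $y_n \in [x,z]$: by definition of $\Lambda_\tau$ every $z \in \Lambda_\tau$ has a point $y_n \in [x,z]$ with $d(x,y_n) \in [n\tau, (n+1)\tau]$ and $d(y_n, gx) \leq \tau$ for some $g \in \Gamma$, and by Lemma \ref{shadow-ball} the shadow $\textup{Shad}_x(gx, \tau + 8\delta)$ is contained in the visual ball $B(z, \rho_n)$ with $\rho_n \asymp e^{-n\tau}$, so $z$ lies in such a ball. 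The number of orbit points $gx$ with $d(x,gx) \leq (n+1)\tau + \tau$ is at most $\#\Gamma x \cap B(x, (n+2)\tau)$, which by definition of $h_\Gamma$ grows at most like $e^{(h_\Gamma + \varepsilon)(n+2)\tau}$ for $n$ large. Summing $\rho_n^{\alpha}$ over such a cover gives a bound $\lesssim e^{(h_\Gamma + \varepsilon)(n+2)\tau} e^{-\alpha n \tau}$, which tends to $0$ as $n \to \infty$ whenever $\alpha > h_\Gamma + \varepsilon$; hence $\mathcal{H}^\alpha(\Lambda_\tau) = 0$ for all $\alpha > h_\Gamma$, giving $\textup{HD}(\Lambda_\tau) \leq h_\Gamma$. (One must be slightly careful that the radii $\rho_n$ tend to $0$ uniformly, which they do.)

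The lower bound $h_\Gamma \leq \textup{HD}(\Lambda_{\textup{rad}})$ is the main obstacle and follows the standard Bishop–Jones strategy of constructing a Cantor-like subset of $\Lambda_{\textup{rad}}$ supporting a Frostman measure. Fix $\varepsilon > 0$; by definition of $h_\Gamma$ as a limsup there is a sequence of radii $T_j \to \infty$ along which $\#\Gamma x \cap B(x, T_j) \geq e^{(h_\Gamma - \varepsilon)T_j}$. Using the packing/separation properties afforded by properness and $\delta$-hyperbolicity (the relevant orbit points can be taken $D$-separated for a suitable $D = D(\delta)$, and their shadows are then ``almost disjoint'' with comparable size), one builds inductively a tree of nested shadows: at each level one passes to a large separated sub-family of deep orbit points in the shadow from the previous level, the branching being of order $e^{(h_\Gamma - \varepsilon')T}$ against a geometric decay of the shadow radii of order $e^{-T}$. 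The intersection is a compact subset of $\partial X$ all of whose points are radial (each is approached by orbit points staying boundedly close to the defining ray), and the natural measure distributing mass equally among the branches satisfies a Frostman estimate $\nu(B(z,\rho)) \lesssim \rho^{h_\Gamma - \varepsilon''}$; the mass distribution principle then yields $\textup{HD}(\Lambda_{\textup{rad}}) \geq h_\Gamma - \varepsilon''$, and letting $\varepsilon'' \to 0$ finishes the proof. The technical heart — and the step I expect to be most delicate — is controlling the geometry of nested shadows in a merely $\delta$-hyperbolic space (as opposed to CAT$(-1)$), i.e. showing that the shadows of deep separated orbit points behave enough like a regular Cantor construction; here one invokes Lemmas \ref{product-rays}, \ref{parallel-geodesics} and \ref{shadow-ball} to convert Gromov-product estimates into metric comparisons with only additive $\delta$-errors.
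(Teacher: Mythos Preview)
The paper does not prove this theorem: it is stated with attribution to \cite{BJ97} and \cite{DSU17} and used as a black box (the paper's own contribution begins immediately afterwards with Theorem~\ref{BJ-PD}). There is therefore no proof in the paper to compare your proposal against.

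That said, your sketch follows the standard Bishop--Jones strategy and is broadly sound, but the chain of inequalities as you have written it does not close. You argue $\sup_\tau \textup{HD}(\Lambda_\tau) \leq h_\Gamma$ (upper bound) and $h_\Gamma \leq \textup{HD}(\Lambda_{\textup{rad}})$ (lower bound), together with $\textup{HD}(\Lambda_{\textup{u-rad}}) = \sup_\tau \textup{HD}(\Lambda_\tau)$ by countable stability; but this only yields
\[
\textup{HD}(\Lambda_{\textup{u-rad}}) \leq h_\Gamma \leq \textup{HD}(\Lambda_{\textup{rad}}),
\]
and the asserted equality $\textup{HD}(\Lambda_{\textup{rad}}) = \textup{HD}(\Lambda_{\textup{u-rad}})$ is never justified. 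To close the loop you need one of two refinements. Either (a) note that your Cantor-tree construction in fact produces \emph{uniformly} radial points --- if the levels are built at a fixed scale step (rather than along an arbitrary sparse subsequence $T_j$) the gaps are bounded and the limit set sits inside some $\Lambda_\tau$, giving $h_\Gamma \leq \textup{HD}(\Lambda_{\textup{u-rad}})$ directly; this is how the result is stated and proved in \cite{DSU17}. Or (b) extend your covering argument from $\Lambda_\tau$ to the full radial set: writing $\Lambda_{\textup{rad}}^\tau$ for the points radial with parameter $\tau$, each $z\in\Lambda_{\textup{rad}}^\tau$ lies in $\textup{Shad}_x(gx,2\tau)$ for orbit points $gx$ at arbitrarily large distance, so $\Lambda_{\textup{rad}}^\tau$ is contained in the $\limsup$ of these shadow families and the Borel--Cantelli-type sum $\sum_T \#\{g: d(x,gx)\in[T,T+1]\}\, e^{-\alpha T}$ converges for $\alpha>h_\Gamma$, giving $\textup{HD}(\Lambda_{\textup{rad}}^\tau)\leq h_\Gamma$ and then $\textup{HD}(\Lambda_{\textup{rad}})\leq h_\Gamma$ by countable stability over $\tau\in\mathbb{N}$. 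Either fix is routine, but as your proposal stands there is a genuine logical gap.
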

%
%
%
In order to introduce the techniques we will use in the proof of Theorem \ref{theo:intro-packing-ergodic-set} we start with the
\begin{proof}[Proof of Theorem \ref{theo-intro-Roblin}]
	By Theorem \ref{Bishop-Jones} we have 
	$$\overline{h_{\Gamma}}(X) = h_\Gamma = \sup_{\tau \geq 0} \text{HD}(\Lambda_\tau) \leq \sup_{\tau \geq 0} \underline{\text{MD}}(\Lambda_\tau).$$
	So it would be enough to show that 
	$$\sup_{\tau \geq 0} \underline{\text{MD}}(\Lambda_\tau) \leq \underline{h_\Gamma}(X).$$
    We fix $\tau \geq 0$. For every $\varepsilon > 0$ we take a subsequence $T_j \to +\infty$ such that 
	$$\frac{1}{T_j}\log \#(\Gamma x \cap \overline{B}(x,T_j)) \leq \underline{h_\Gamma}(X) + \varepsilon$$
	for every $j$.
	We define $\rho_j = e^{-T_j}$: notice that $\rho_j \to 0$. 
	Let $k_j\in \mathbb{N}$ be such that $(k_j-1)\tau \le T_j < k_j\tau$. If
	$z\in \Lambda_\tau$ then there exists a geodesic ray $[x,z]$ and a point $y_j \in [x,z]$ with $d(x,y_j) \in [(k_j-3)\tau, (k_j-2)\tau]$ and $d(y_j, gx) \le \tau$ for some $g\in \Gamma$. This $g$ satisfies $d(x,gx)\le (k_j-1)\tau \le T_j$. 
	Moreover $z\in \text{Shad}_x(gx, \tau + 8\delta)$, since $d(gx,[x,z]) \le \tau$ and since every two parallel geodesic rays are $8\delta$ apart by Lemma \ref{parallel-geodesics}. We showed that the set of shadows $\{\text{Shad}_x(gx, \tau + 8\delta)\}$ with $g\in \Gamma$ such that $(k_j-4)\tau \le d(x,gx)\le (k_j-1)\tau \le T_j$ cover $\Lambda_\tau$. The cardinality of this set of shadows is at most $e^{(\underline{h_\Gamma}(X) + \varepsilon)T_j} \le e^{(\underline{h_\Gamma}(X) + \varepsilon)k_j\tau}$.
	Among these shadows indexed by these elements $g\in \Gamma$ we select the ones that intersect $\Lambda_\tau$. For these ones, the construction above gives a point $z_g \in \Lambda_\tau$, a point $y_g$ along $[x,z_g]$ such that $(k_j-3)\tau \le d(x,y_g) \le (k_j -2)\tau$ and $d(y_g,gx)\le \tau$. Therefore
	\begin{equation}
		\begin{aligned}
			\text{Shad}_{x}(gx, \tau + 8\delta)\subseteq \text{Shad}_{x}(y_g, 2\tau + 8\delta) &\subseteq B(z_g, e^{2\tau+8\delta}e^{-d(x,y_g)})\\ &\subseteq B(z_g, e^{5\tau+8\delta}\rho_j),
		\end{aligned}
	\end{equation}
	by Lemma \ref{shadow-ball}. This shows that $\Lambda_\tau$ is covered by at most $e^{(\underline{h_\Gamma}(X) + \varepsilon)k_j\tau}$ generalized visual balls of radius $e^{5\tau+8\delta}\rho_j$.
	Therefore
	\begin{equation*}
		\begin{aligned}
		\underline{\text{MD}}(\Lambda_{\tau})&\leq \liminf_{j \to +\infty}\frac{\log\text{Cov}(\Lambda_{\tau}, e^{5\tau + 8\delta}\rho_j)}{\log \frac{1}{e^{5\tau + 8\delta}\rho_j}} \\
		&\leq \liminf_{j \to +\infty}\frac{(\underline{h_\Gamma}(X) + \varepsilon)k_j\tau}{-5\tau - 8\delta + (k_j-1)\tau} = \underline{h_\Gamma}(X)+\varepsilon.
		\end{aligned}
	\end{equation*}
	By the arbitrariness of $\varepsilon$ we conclude the proof.
\end{proof}
	\noindent There are several remarks we can do about this proof:
	\begin{itemize}
		\item[(a)] The proof is still valid for every sequence $T_j \to + \infty$, so it implies also that $\sup_{\tau \geq 0} \overline{\textup{MD}}(\Lambda_\tau) \leq h_\Gamma$. Therefore we have another improvement of Bishop-Jones Theorem, namely:
		\begin{equation}
			\label{eq-MD-critical}
			\sup_{\tau \geq 0} {\textup{HD}}(\Lambda_\tau) = \sup_{\tau \geq 0} \underline{\textup{MD}}(\Lambda_\tau) = \sup_{\tau \geq 0} \overline{\textup{MD}}(\Lambda_\tau)=h_\Gamma.
		\end{equation}
		\item[(b)] $\Lambda_{\textup{u-rad}} = \bigcup_{\tau \in \mathbb{N}}\Lambda_\tau$, so by (a) and \eqref{PD-supMD} we deduce that $\textup{PD}(\Lambda_{\textup{u-rad}})=h_\Gamma$.
		\item[(c)] We can get the same estimate of the Minkowski dimensions from above weakening the assumptions on the sets $\Lambda_\tau$. Indeed take a set $\Lambda_{\tau, \Theta}$ such that $\limsup_{i\to +\infty} \frac{\vartheta_{i+1}}{\vartheta_i} = 1$. Then we can cover this set by shadows casted by points of the orbit $\Gamma x$ whose distance from $x$ is between $\vartheta_{i_j}$ and $\vartheta_{i_j + 1}$, with $i_j \to +\infty$ when $j \to + \infty$.
		Therefore arguing as before we obtain
		$$\underline{\textup{MD}}(\Lambda_{\tau, \Theta}) \leq \liminf_{j \to +\infty}\frac{(\underline{h_\Gamma}(X) + \varepsilon)\vartheta_{i_j + 1}}{\vartheta_{i_j - 1}} \leq \underline{h_\Gamma}(X)+\varepsilon,$$
		where the last step follows by the asymptotic behaviour of the sequence $\Theta$. A similar estimate holds for the upper Minkowski dimension.
		\item[(d)] One could be tempted to conclude that the packing dimension of the set $\bigcup_{\tau \geq 0}\bigcup_{\Theta} \Lambda_{\tau, \Theta}$, where $\Theta$ is a sequence such that $\limsup_{i\to +\infty} \frac{\vartheta_{i+1}}{\vartheta_i} = 1$, is $\leq h_\Gamma$. But this is not necessarily true since in \eqref{PD-supMD} it is required a countable covering and not an arbitrary covering. That is why the estimate of the packing dimension of the ergodic limit set $\Lambda_\textup{erg}$ in Theorem \ref{theo:intro-packing-ergodic-set} is not so easy. However as we will see in a moment the ideas behind the proof are similar to the ones used in the proof of  Theorem \ref{theo-intro-Roblin}.
	\end{itemize}

\begin{proof}[Proof of Theorem \ref{theo:intro-packing-ergodic-set}]
We notice it is enough to prove that PD$(\Lambda_\text{erg}) \leq h_\Gamma$. The strategy is the following: for every $\varepsilon > 0$ we want to find a countable family of sets $\lbrace B_k\rbrace_{k\in\mathbb{N}}$ of $\partial X$ such that $\Lambda_{\text{erg}} \subseteq \bigcup_{k=1}^\infty B_k$ and $\sup_{k\in \mathbb{N}}\overline{\text{MD}}(B_k)\leq (h_\Gamma + \varepsilon)(1+\varepsilon)$. Indeed if this is true then by \eqref{PD-supMD}:
$$\text{PD}(\Lambda_\text{erg}) \leq \sup_{k\in \mathbb{N}}\overline{\text{MD}}(B_k)\leq (h_\Gamma + \varepsilon)(1+\varepsilon),$$
and by the arbitrariness of $\varepsilon$ the thesis is true.\\
So we fix $\varepsilon > 0$ and we proceed to define the countable family. For $m,n\in \mathbb{N}$ and $l\in \mathbb{Q}_{> 0}$ we define
$$B_{m,l,n} = \bigcup_{\Theta} \Lambda_{m,\Theta},$$
where $\Theta$ is taken among all sequences such that for every $i\geq n$ it holds 
$$l-\eta_l \leq \frac{\vartheta_{i}}{i} \leq l + \eta_l,$$
where $\eta_l = \frac{\varepsilon}{2+\varepsilon}\cdot l$.\\
First of all if $z\in \Lambda_\text{erg}$ we know that $z\in \Lambda_{m,\Theta}$ for some $m \in \mathbb{N}$ and $\Theta$ satisfying $\lim_{i\to +\infty}\frac{\vartheta_{i}}{i} = L \in (0,\infty)$, in particular there exists $n\in \mathbb{N}$ such that $L - \beta \leq \frac{\vartheta_{i}}{i} \leq L + \beta$ for every $i\geq n$, where $\beta = \frac{2+\varepsilon}{4+3\varepsilon}\cdot\eta_L$. Now we take $l \in \mathbb{Q}_{>0}$ such that $\vert L - l \vert < \beta$. Then it is easy to see that $[L-\beta,L + \beta]\subseteq [l-2\beta, l+ 2\beta]$ and $\eta_l \geq \eta_L - \frac{\varepsilon}{2+\varepsilon}\beta \geq 2\beta$. So by definition $z\in B_{m,l,n}$, therefore $\Lambda_\text{erg}\subseteq \bigcup_{m,l,n} B_{m,l,n}.$ \\
Now we need to estimate the upper Minkowski dimension of each set $B_{m,l,n}$. We take $T_0$ big enough such that 
$$\frac{1}{T}\log \#(\Gamma x \cap \overline{B}(x,T)) \leq h_\Gamma + \varepsilon$$
for every $T\geq T_0$.
Let us fix $\rho \leq e^{-\max\lbrace T_0, n(l-\eta_l) \rbrace}$. We consider $j\in \mathbb{N}$ with the following property: $(j-1)(l-\eta_l) < \log \frac{1}{\rho}\leq j(l-\eta_l)$. We observe that the condition on $\rho$ gives $\log \frac{1}{\rho} \geq n(l-\eta_l)$, implying $j\geq n$.\\
We consider the set of elements $g\in \Gamma$ such that 
\begin{equation}
	\label{shadow}
	j(l-\eta_l) - m \leq d(x,gx) \leq (j+1)(l+\eta_l) + m.
\end{equation}
For any such $g$ we consider the shadow $\text{Shad}_{x}(gx,2m + 8\delta)$. We claim that this set of shadows covers $B_{m,l,n}$. 
Indeed every point $z$ of $B_{m,l,n}$ belongs to some $\Lambda_{m,\Theta}$ with $l-\eta_l \leq \frac{\vartheta_{i}}{i} \leq l + \eta_l$ for every $i\geq n$. In particular this holds for $i=j$, and so $j(l-\eta_l) \leq \vartheta_j \leq j(l+\eta_l)$. Hence there exists a point $y$ along a geodesic ray $[x,z]$ satisfying:
$$j(l-\eta_l)\leq \vartheta_j \leq d(x,y)\leq \vartheta_{j+1}\leq (j+1)(l+\eta_l), \qquad d(y,\Gamma x) \leq m.$$
So there is $g\in \Gamma$ satisfying \eqref{shadow} such that $z\in \text{Shad}_{x}(gx,2m + 8\delta)$, by Lemma \ref{parallel-geodesics}.
Moreover these shadows are casted by points at distance at least $j(l-\eta_l) - m$ from $x$, so at distance at least $\log\frac{1}{e^m\rho}$ from $x$. We need to estimate the number of such $g$'s.
By the assumption on $\rho$ we get that this number is less than or equal to $e^{(h_\Gamma+\varepsilon)[(j+1)(l+\eta_l) + m]}.$
Hence, using again Lemma \ref{shadow-ball}, we conclude that $B_{m,l,n}$ is covered by at most $e^{(h_\Gamma+\varepsilon)[(j+1)(l+\eta_l) + m]}$ generalized visual balls of radius $e^{5m+8\delta}\rho$. Thus
%
\begin{equation*}
	\begin{aligned}
		\overline{\text{MD}}(B_{m,l,n})&=\limsup_{\rho \to 0}\frac{\log\text{Cov}(B_{m,l,n}, e^{5m+8\delta}\rho)}{\log \frac{1}{e^{5m+8\delta}\rho}}\\
		&\leq \limsup_{j\to +\infty}\frac{(h_\Gamma+\varepsilon)[(j+1)(l+\eta_l) + m]}{-5m -8\delta + (j-1)(l-\eta_l)} \\
		&\leq (h_\Gamma+\varepsilon)(1+\varepsilon),
	\end{aligned}
\end{equation*}
where the last inequality follows from the choice of $\eta_l$.
\end{proof}

\section{An interpretation of the ergodic limit set}
\label{sec-dynamics}

Let $X$ be a proper metric space. The {\em space of parametrized geodesic lines} of $X$ is
$$\text{Geod}(X) = \lbrace \gamma\colon \mathbb{R} \to X \text{ isometric embedding}\rbrace,$$
considered as a subset of $C^0(\mathbb{R},X)$, the space of continuous maps from $\mathbb{R}$ to $X$ endowed with the uniform convergence on compact subsets of $\mathbb{R}$. By lower semicontinuity of the length under uniform convergence (cp. \cite[Proposition I.1.20]{BH09}), we have that $\text{Geod}(X)$ is closed in $C^0(\mathbb{R},X)$. 
There is a natural action of $\mathbb{R}$ on $\text{Geod}(X)$ defined by reparametrization:
$$\Phi_t\gamma (\cdot) = \gamma(\cdot + t)$$
for every $t\in \mathbb{R}$.
It is a continuous action, i.e. the map $\Phi_t$ is a homeomorphism of $\text{Geod}(X)$ for every $t\in\mathbb{R}$ and $\Phi_t \circ \Phi_s = \Phi_{t+s}$ for every $t,s\in \mathbb{R}$. This action is called the \emph{geodesic flow} on $X$.

Let $\Gamma$ be a discrete group of isometries of $X$. We consider the quotient space $\Gamma \backslash X$ and the standard projection $\pi\colon X \to \Gamma \backslash X$. On the quotient it is defined a standard pseudometric by $d(\pi x, \pi y) = \inf_{g\in \Gamma}d(x, gy)$. Since the action is discrete then this pseudometric is actually a metric. Indeed if $d(\pi x, \pi y)= 0$ then for every $n > 0$ there exists $g_n\in \Gamma$ such that $d(x,g_ny)\leq \frac{1}{n}$. In particular $d(x,g_nx)\leq d(x,g_ny) + d(g_ny,g_nx) \leq d(x,y) + 1$ for every $n$. The cardinality of these $g_n$'s is finite, thus there must be one of these $g_n$'s such that $d(x, g_ny) = 0$, i.e. $x= g_ny$, and so $\pi x = \pi y$. 

The group $\Gamma$ acts on $\textup{Geod}(X)$ by $(g\gamma)(\cdot) = g(\gamma(\cdot))$. This action is by homeomorphisms and we define the space
$$\textup{Proj-Geod}(\Gamma\backslash X) := \Gamma \backslash \textup{Geod}(X),$$
endowed with the quotient topology. The elements of $\text{Proj-Geod}(\Gamma\backslash X)$ will be denoted by $[\gamma]$, where $\gamma \in \text{Geod}(X)$ is a representative. The action of $\Gamma$ commutes with the flow $\Phi_t$ in the sense that $g\circ \Phi_t = \Phi_t \circ g$ for every $g\in \Gamma$ and $t\in \mathbb{R}$. Therefore the flow $\Phi_t$ defines a flow on $\textup{Proj-Geod}(\Gamma\backslash X)$, i.e. an action of $\mathbb{R}$ by homeomorphisms. This flow, still denoted $\Phi_t$, is called the \emph{geodesic flow} on $\Gamma \backslash X$. 

\begin{obs}
	\label{rem:local-geodesics}
	The name is a bit improper in this generality. Indeed \linebreak $\textup{Proj-Geod}(\Gamma\backslash X)$ does not coincide with the space of local geodesics of $\Gamma \backslash X$. However, when $\Gamma$ acts freely then every element of $\textup{Proj-Geod}(\Gamma\backslash X)$ is a local geodesic of $\Gamma \backslash X$. If, additionally, every local geodesic of $X$ is a geodesic, then $\textup{Proj-Geod}(\Gamma\backslash X)$ is naturally homeomorphic to the space of local geodesics of $\Gamma \backslash X$. In this case the flow on $\textup{Proj-Geod}(\Gamma\backslash X)$ coincides with the geodesic flow on the space of all local geodesics of $\Gamma \backslash X$. The assumptions above are satisfied for instance when $X$ is Busemann convex (e.g. \textup{CAT}$(0)$) and $\Gamma$ is torsion-free. Observe that the space $\textup{Proj-Geod}(\Gamma\backslash X)$ is the one studied also in \cite{DilsavorThompson2023} in the \textup{CAT}$(-1)$ setting.
\end{obs}

	The couple $(\textup{Proj-Geod}(\Gamma\backslash X), \Phi_1)$, where $\Phi_1$ is the geodesic flow of $\Gamma \backslash X$ at time $1$, is a dynamical system. An important role in its study is played by $\Phi_1$-invariant probability measures, i.e. Borel measures $\mu$ on $\textup{Proj-Geod}(\Gamma\backslash X)$ with total mass $1$ and such that $(\Phi_1)_\#\mu = \mu$, where $(\Phi_1)_\#$ denotes the pushforward. The set of $\Phi_1$-invariant probability measures is a closed, convex subset of all Borel measures on $\textup{Proj-Geod}(\Gamma\backslash X)$, whose extremal points are ergodic. We recall that a $\Phi_1$-invariant probability measure is ergodic if for every $\Phi_1$-invariant subset $A\subseteq \textup{Proj-Geod}(\Gamma\backslash X)$, i.e. such that $\Phi_1^{-1}(A) = \Phi_{-1}(A) \subseteq A$, we have $\mu(A) \in \{0,1\}$.
	Ergodic measures satisfy the famous Birkhoff's Ergodic Theorem that we now state in our specific situation.
	\begin{prop}
		\label{prop-Birkhoff}
		Let $X$ be a proper metric space, let $\Gamma < \textup{Isom}(X)$ be discrete. Let $(\textup{Proj-Geod}(\Gamma\backslash X), \Phi_1)$ be the geodesic flow on $\Gamma \backslash X$ as defined above. Let $\mu$ be an ergodic, $\Phi_1$-invariant probability measure. For every $f \in L^1(\mu)$ it holds
		\begin{equation}
			\label{Birkhoff}
			\lim_{N\to +\infty}\frac{1}{N}\sum_{j=0}^{N-1} (f \circ \Phi_j)([\gamma]) = \int f\,d\mu
		\end{equation}
		for $\mu$-a.e. $[\gamma] \in \textup{Proj-Geod}(\Gamma\backslash X)$. In other words, the limit in \eqref{Birkhoff} exists for $\mu$-a.e. $[\gamma] \in \textup{Proj-Geod}(\Gamma\backslash X)$ and equals the right hand side.
	\end{prop}

	The next result, which is a reformulation of Theorem \ref{theo-intro:ergodic_measures_concentrated_on_ergodic_limit_set}, motivates the name of the ergodic limit set. 

\begin{theo}
	\label{theo:ergodic_measures_concentrated_on_ergodic_limit_set}
	Let $X$ be a proper, $\delta$-hyperbolic space. Let $\Gamma < \textup{Isom}(X)$ be discrete and non-elementary. 
	Let $\mu$ be an ergodic, $\Phi_1$-invariant, probability measure on $\textup{Proj-Geod}(\Gamma\backslash X)$. Then $\mu$ is concentrated on the set
	$$\{[\gamma] \in \textup{Proj-Geod}(\Gamma\backslash X) \, : \, \gamma^\pm \in \Lambda_\textup{erg}\}.$$
\end{theo}
Notice that the property $\gamma^\pm \in \Lambda_\textup{erg}$ is well defined, i.e. it does not depend on the representative of the class $[\gamma]$. This follows by the $\Gamma$-invariance of $\Lambda_\textup{erg}$, see Lemma \ref{lemma:gamma-invariance-of-limit-sets}.
\begin{proof}
	Since $X$ is proper we can find a countable set $\{x_i\}_{i\in\mathbb{N}}\subseteq X$ such that $X = \bigcup_{i\in\mathbb{N}} B(x_i,1)$. For every $i$ we define the sets
	$$V_i := \{ \gamma \in \text{Geod}(X)\,:\, \gamma(0) \in B(x_i,1) \}$$
	and
	$$U_i := \Gamma \backslash V_i \subseteq \text{Proj-Geod}(\Gamma\backslash X).$$
	Since $\{V_i\}_{i\in \mathbb{N}}$ is a covering of $\text{Geod}(X)$ then also $\{U_i\}_{i\in\mathbb{N}}$ is a covering of $\text{Proj-Geod}(\Gamma\backslash X)$. In particular there must be some $i_0\in \mathbb{N}$ such that $\mu(U_{i_0}) = c > 0$.
	To every $[\gamma] \in U_{i_0}$ we associate the set of integers $\Theta([\gamma]) = \lbrace \vartheta_i([\gamma])\rbrace$ defined recursively by
	$$\vartheta_0([\gamma])=0, \qquad \vartheta_{i+1}([\gamma]) = \min \lbrace n\in\mathbb{N}, n > \vartheta_i([\gamma]) \text{ s.t. } \Phi_n([\gamma]) \in U_{i_0}\rbrace.$$
	We apply Proposition \ref{prop-Birkhoff} to the indicator function of the set $U_{i_0}$, namely $\chi_{U_{i_0}}$, obtaining that for $\mu$-a.e.$[\gamma] \in \text{Proj-Geod}(\Gamma\backslash X)$ it holds
	$$\exists \lim_{N\to +\infty}\frac{1}{N}\sum_{j=0}^{N-1} (\chi_{U_{i_0}} \circ \Phi_j)([\gamma]) = \mu(U_{i_0}) = c \in (0,1].$$
	We remark that $(\chi_{U_{i_0}} \circ \Phi_j)([\gamma]) = 1$ if and only if $j\in \Theta([\gamma])$ and it is $0$ otherwise. So 
	$$\lim_{N\to +\infty}\frac{1}{N}\sum_{j=0}^{N-1} (\chi_{U_{i_0}} \circ \Phi_j)([\gamma]) = \lim_{N\to +\infty}\frac{\#\Theta([\gamma]) \cap [0,N-1]}{N},$$
	and the right hand side is by definition the density of the set $\Theta([\gamma])$. It is classical that, given the standard increasing enumeration $\lbrace \vartheta_0([\gamma]), \vartheta_1([\gamma]),\ldots \rbrace$ of $\Theta([\gamma])$, it holds 
	$$\lim_{N\to +\infty}\frac{\#\Theta([\gamma]) \cap [0,N-1]}{N} = \lim_{N\to +\infty}\frac{N}{\vartheta_N([\gamma])}.$$
	Putting all together we conclude that for $\mu$-a.e.$[\gamma] \in \text{Proj-Geod}(\Gamma \backslash X)$ the following is true
	\begin{equation}
		\label{eq-rec-times}
		\exists \lim_{N\to +\infty}\frac{\vartheta_N([\gamma])}{N} = \frac{1}{c} \in [1,+\infty).
	\end{equation}
	In the same way, applying the same argument to the flow at time $-1$ we get that for $\mu$-a.e.$[\gamma] \in \text{Proj-Geod}(\Gamma \backslash X)$ we have
	\begin{equation}
		\label{eq-rec-times-negative}
		\exists \lim_{N\to +\infty}\frac{\vartheta_N([-\gamma])}{N} = \frac{1}{c} \in [1,+\infty).
	\end{equation}
	Here $-\gamma$ denotes the curve $-\gamma(t) = \gamma(-t)$. We deduce that \eqref{eq-rec-times} and \eqref{eq-rec-times-negative} hold together for $\mu$-a.e.$[\gamma] \in \text{Proj-Geod}(\Gamma \backslash X)$. Finally we need to prove that for every $[\gamma] \in \text{Proj-Geod}(\Gamma \backslash X)$ satisfying \eqref{eq-rec-times} and \eqref{eq-rec-times-negative} we have $\gamma^{\pm} \in \Lambda_\text{erg}$. We show that $\gamma^+ \in \Lambda_\text{erg}$, the other being similar.
	We notice that an integer $n$ satisfies $n \in \Theta([\gamma])$ if and only if there exists a representative $g\gamma$ of $[\gamma]$, with $g\in \Gamma$, such that $\Phi_n(g\gamma) \in V_{i_0}$, i.e. $g\gamma(n) \in B(x_{i_0},1)$. In other words $n \in \Theta([\gamma])$ if and only if
	\begin{equation}
		\label{eq:returning_ergodic}
		d(\gamma(n), \Gamma x_{i_0}) < 1.
	\end{equation}
	We choose $x_{i_0}$ as basepoint of $X$. We fix a geodesic ray $\xi = [x_{i_0}, \gamma^+]$. By Lemma \ref{parallel-geodesics} we have that $d(\xi(t), \gamma(t)) \leq 8\delta + 1$ for every $t\geq 0$. This, together with \eqref{eq:returning_ergodic} says that $d(\xi(\vartheta_N([\gamma])), \Gamma x_{i_0}) < 8\delta + 2$. By definition this means that $\gamma^+ \in \Lambda_{\tau, \Theta([\gamma])}$, where $\tau = 8\delta + 2$. Finally we observe that the sequence $\Theta([\gamma])=\lbrace\vartheta_N([\gamma])\rbrace$ satisfies \eqref{eq-rec-times}, that is exactly the condition that defines a sequence involved in the definition of $\Lambda_\text{erg}$. Repeating the argument for $\gamma^-$, we get the thesis.
\end{proof}

\bibliographystyle{alpha}
\bibliography{An_extension_of_Bishop_Jones_Theorem}

\end{document}